\newcommand{\nocontentsline}[3]{}
\newcommand{\tocless}[2]{\bgroup\let\addcontentsline=\nocontentsline#1*{#2}\egroup}
\newcommand{\ch}[2]
{\begin{bmatrix}
 #1 \\
 #2\\
\end{bmatrix}}
\newcommand{\chr}[4]
{\begin{bmatrix}
 #1 & #2\\[3pt]
 #3 & #4
\end{bmatrix}}
\def\Z{\mathbb Z}
\def\Q{\mathbb Q}
\def\R{\mathbb R}
\def\C{\mathbb C}
\def\H{\mathcal H}
\def\M{{\mathcal M}}
\def\A{\mathcal A}
\def\m{\mathfrak m}
\def\det{\mbox{det }}
\def\J{\mbox{Jac }}
\def\<{\langle}
\def\>{\rangle}
\def\t{\tau}
\def\T{\theta}
\def\d{{\delta }}
\def\embd{\hookrightarrow}
\def\A{\mathcal A}
\def\T{\theta}
\newtheorem{thm}{Theorem}
\newtheorem{prop}[thm]{Proposition}
\newtheorem{lem}[thm]{Lemma}
\newtheorem{rem}[thm]{Remark}
\newtheorem{cor}[thm]{Corollary}
\theoremstyle{definition}
\theoremstyle{remark}
\newcommand{\im}{\textnormal{Im}\;\;\!\!\!}
\def\H{\mathbb{H}}
\newcommand{\supp}{\textnormal{supp}}
\newcommand{\sptwoz}{\operatorname{Sp}_4(\Z)}
\begin{document}
\title[Satake sextic in elliptic fibrations]{The Satake sextic in elliptic fibrations on K3}

\author{A. Malmendier}
\address{Department of Mathematics and Statistics, Utah State University,
Logan, UT 84322}
\email{andreas.malmendier@usu.edu}

\author{T. Shaska}
\address{Department of Mathematics and Statistics, Oakland University, Rochester, MI 48309}
\email{shaska@oakland.edu}
\begin{abstract}
We describe explicit formulas relevant to the F-theory/heterotic string duality  that reconstruct from a specific Jacobian elliptic fibration on the Shioda-Inose surface 
covering a generic Kummer surface the corresponding genus-two curve using the level-two Satake coordinate functions.
We derive explicitly the rational map on the moduli space of genus-two curves realizing the algebraic correspondence between a sextic curve and its Satake sextic.
We will prove that it is not the original sextic defining the genus-two curve, but its corresponding Satake sextic which is manifest in the
F-theory model, dual to the $\mathfrak{so}(32)$ heterotic string with an unbroken $\mathfrak{so}(28)\oplus \mathfrak{su}(2)$ gauge algebra.
\end{abstract}
\subjclass[2010]{11F03, 14J28, 14J81}

\maketitle

\section{Introduction}

Constructing equations of algebraic curves from a given point in the moduli space or a given Jacobian has always been interesting to both mathematicians and physicists.  The only case where such constructions can be made explicit is the case of genus-two curves.  There have been attempts by other authors before  where equations of the genus-two curve is written in terms of the thetanulls of the Jacobian; see  \cite{psw} and  \cite{sw}.  

By a sextic curve we mean a projective curve of degree six.  
To each sextic curve one can associate another sextic curve, called the \emph{Satake sextic}.
The algebraic correspondence between these two sextics is quite complicated, and we give explicit formulas for its construction.
In fact, starting with a plane curve, for example in Rosenhain normal form, the computation of the Igusa invariants provides an effective method for computing the corresponding Satake sextic.
Conversely, starting with the roots of the Satake sextic we will derive explicit formulas for the reconstruction of the original sextic up to equivalence.
This will allow us to explicitly determine the rational map on the moduli space of genus-two curves realizing the correspondence between a sextic curve and its Satake sextic.

For a generic genus-two curve $\mathcal{C}$ the Jacobian variety $\operatorname{Jac}(\mathcal{C})$ is principally polarized abelian surface,
and the minimal resolution of the quotient by the involution automorphism is a special K3 surface. called the \emph{Kummer surface} $\mathrm{Kum}(\operatorname{Jac}\mathcal{C})$.
There is a closely related K3 surface, called the \emph{Shioda-Inose surface} $\mathrm{SI}(\operatorname{Jac}\mathcal{C})$, which carries
a \emph{Nikulin involution}, i.e., an automorphism of order two preserving the holomorphic two-form, such that quotienting by this involution and blowing up the
fixed points recovers the Kummer surface.
By using  the Shioda-Inose surface $\mathrm{SI}(\operatorname{Jac}\mathcal{C})$ that covers the Kummer surface,
one establishes a one-to-one correspondence between two different types of surfaces with the same Hodge-theoretic data, principally polarized abelian surfaces
and algebraic K3 surfaces polarized by a special lattice, which is known as \emph{geometric two-isogeny}.

In string theory, compactifications of the so-called type-IIB string in which the complex coupling varies over a base are generically referred to as F-theory.
The simplest such construction corresponds to a Jacobian elliptic fibration on a K3 surface. By taking this K3 surface to be the
Shioda-Inose surface $\mathrm{SI}(\operatorname{Jac}\mathcal{C})$ a phenomenon called \emph{F-theory/heterotic string duality} is manifested as 
the aforementioned geometric two-isogeny. An important question is whether the original genus-two curve $\mathcal{C}$ is still manifest in this F-theoretic description of 
non-geometric heterotic string backgrounds. We will prove that it is not the original sextic defining the genus-two curve $\mathcal{C}$, but the corresponding Satake sextic 
which is manifest in the F-theoretic data. In fact, the ramification locus of the Satake sextic is the genus-two component of the fixed point set of the Nikulin involution 
on $\mathrm{SI}(\operatorname{Jac}\mathcal{C})$.

This article is structured as follows: in Section~2 we give a brief review of principally polarized abelian surfaces, the 
thetanulls for genus two, and the Satake coordinate functions, as well as their relations to the Igusa invariants and Siegel modular forms. 
We then prove a Picard like result, which gives  the Rosenhein roots of a genus-two curve in terms of the thetanulls and also in terms of the Satake coordinate functions. 
These explicit formulas are instrumental in computing the rational map on the moduli space of genus-two curves that realizes the algebraic correspondence between the sextic and 
its corresponding Satake sextic.  In Section~3 we describe the construction of the Kummer surface $\mathrm{Kum}(\operatorname{Jac}\mathcal{C})$ and
Shioda-Inose surface $\mathrm{SI}(\operatorname{Jac}\mathcal{C})$, as well as the Jacobian elliptic fibrations on them which are relevant for the F-theory/heterotic string duality.  
We then prove that the positions of 7-branes with string charge $(1,0)$ in 
the F-theory model dual to the $\mathfrak{so}(32)$ heterotic string with an unbroken $\mathfrak{so}(28)\oplus \mathfrak{su}(2)$ gauge algebra and 
only one non-vanishing Wilson line form the ramification locus of the Satake sextic which is in algebraic correspondence with the genus-two curve $\mathcal{C}$.

\section{The correspondence between a sextic and its Satake sextic}
In this section we give a brief review of principally polarized abelian surfaces, the 
thetanulls, the Satake coordinate functions, and their relations to the Igusa invariants and Siegel modular forms. 
We then prove a Picard like result, which gives the Rosenhein roots of genus two curve in terms of the thetanulls and also in terms of the Satake coordinate functions. 
We also compute the rational map on the moduli space of genus-two curves realizing the correspondence between a sextic curve and its Satake sextic.

\subsection{Abelian surfaces}
The \emph{Siegel upper-half space} is the set of two-by-two symmetric matrices over $\mathbb{C}$ whose imaginary part is positive definite, i.e.,
\begin{equation*}
\label{Siegel_tau}
 \mathbb{H}_2 = \left. \left\lbrace \tau = \left( \begin{array}{cc} \tau_1 & z \\ z & \tau_2\end{array} \right) \right|
 \tau_1, \tau_2, z \in \mathbb{C}\,,\; \im{(\tau_1)} \, \im{(\tau_2}) > \im{(z)}^2\,, \; \im{(\tau_2)} > 0 \right\rbrace .
\end{equation*}
The Siegel three-fold is a quasi-projective variety of dimension three obtained from the Siegel upper half plane when
quotienting out by the action of the modular transformations $\Gamma_2:=
\sptwoz$, i.e., 
\begin{equation}
 \mathcal{A}_2 =  \mathbb{H}_2 / \Gamma_2 \;.
\end{equation} 
For each $\tau \in \mathbb{H}_2$ the columns of the matrix $\left[ \, \mathbb{I}_2  |  \tau \right]$ form a
lattice $\Lambda$ in  $\C^2$ and determine a principally polarized complex abelian surface $\mathbf{A}_{\tau} = \C^2/\Lambda$.
Two abelian surfaces $\mathbf{A}_{\tau}$  and $\mathbf{A}_{\tau'}$ are isomorphic if and only if there is a symplectic matrix $M \in \Gamma_2$
such that $\tau' = M (\tau)$. It follows that the Siegel three-fold $\mathcal{A}_2$ is also the set of isomorphism classes of principally polarized abelian surfaces.
The even Siegel modular forms of $\mathcal{A}_2$ are a polynomial ring in four free generators of degrees $4$, $6$, $10$ and $12$ that will be denoted by $\psi_4, \psi_6, \chi_{10}$ and $\chi_{12}$, respectively. Igusa showed in \cite{MR0229643} that for the full ring of modular forms, one needs an additional generator $\chi_{35}$ which is algebraically dependent on the others. 
We also define $\Gamma_2(2n) = \lbrace M \in \Gamma_2 | \, M \equiv \mathbb{I} \mod{2n}\rbrace$ 
with corresponding Siegel modular threefold $\mathcal{A}_2(2)$
such that $ \Gamma_2/\Gamma_2(2)\cong S_6$ where $S_6$ is the permutation group of order $720$.

If $\mathcal{C}$ is an irreducible nonsingular projective curve with self-intersection $\mathcal{C}\cdot \mathcal{C}=2$ then $\mathcal{C}$ is a smooth curve of genus two.
We choose a symplectic homology basis for $\mathcal{C}$, say $ \{ A_1, A_2, B_1, B_2 \},$ such that the intersection products 
$A_i \cdot A_j = B_i \cdot B_j =0$ and $A_i \cdot B_j= \d_{i j},$ where $\d_{i j}$ is the Kronecker delta. We choose a basis $\{ w_i\}$ for the
space of holomorphic one-forms such that $\int_{A_i} w_j = \d_{i j}$. The matrix 
\[\tau = \left[ \int_{B_i} w_j\right]\]
is  the \emph{period matrix} of $\mathcal{C}$ and $\J\!(\mathcal{C}) = \mathbf{A}_{\tau}$ is the Jacobian of $\mathcal{C}$. 
Moreover, the map $\jmath_\mathcal{C}: \mathcal{C} \to \operatorname{Jac}(\mathcal{C})$ is an embedding
of the moduli space of genus-two curves $\mathcal{M}_2$ into the space of  principally polarized abelian surfaces, i.e.,
\[ \M_2 \embd \A_2 \;,\] 
where the hermitian form associated to the divisor class 
$[\mathcal{C}]$ is the principal polarization $\rho$ on $\operatorname{Jac}(\mathcal{C})$. 
Moreover, a curve $\mathcal{C}$ of genus-two is called generic  if the N\'eron-Severi lattice is generated by $[\mathcal{C}]$, i.e., $\mathrm{NS}(\operatorname{Jac} \mathcal{C})=\mathbb{Z}[\mathcal{C}]$. 
Since we have $\rho^2=2$, the transcendental lattice is $T(\operatorname{Jac}\mathcal{C}) = H \oplus H \oplus \langle -2 \rangle$ in this case.\footnote{$H$ is the standard hyperbolic lattice 
with the quadratic form $q=x_1x_2$.}
Conversely, one can always regain $\mathcal{C}$ from the pair $(\operatorname{Jac}\mathcal{C}, \rho)$ where $\rho$ is a principal polarization. 

The Humbert surface $H_{\Delta}$ with invariant $\Delta$ is the space of principally polarized abelian surfaces admitting a symmetric endomorphism 
of discriminant $\Delta$. The discriminant $\Delta$ is always a positive integer $\equiv 0, 1\mod 4$. In fact, $H_{\Delta}$ is the image of the equation
\begin{equation}
 a \, \tau_1 + b \, z + c \, \tau_3 + d\, (z^2 -\tau_1 \, \tau_2) + e = 0 \;,
\end{equation}
with integers $a, b, c, d, e$ satisfying $\Delta=b^2-4\,a\,c-4\,d\,e$ and $\tau
= \bigl(\begin{smallmatrix}
\tau_1&z\\ z&\tau_2
\end{smallmatrix} \bigr) \in \mathbb{H}_2$.
Therefore, inside of $\mathcal{A}_2$ sit the Humbert surfaces $H_1$ and $H_4$ that are defined as the images of the rational divisors 
associated to $z=0$ and $\tau_1=\tau_2$, respectively. 
In fact, $H_1$ and $H_4$ form the two connected components of the singular locus of $\mathcal{A}_2$, and
their formal sum $H_1 + H_4$ is the vanishing divisor of $\chi_{35}$.

Furthermore, Torelli's theorem states that the map sending a curve $\mathcal{C}$ to its Jacobian variety $\mathrm{Jac}(C)$ induces a birational map from the moduli space $\mathcal{M}_2$ 
of genus-two curves to the complement of the Humbert surface $H_1$ in $\mathcal{A}_2$.
This locus is expressed in terms of modular forms as $\mathcal{A}_2 \backslash \, \supp{ (\chi_{10})}_0$.
That is, a period point $\tau$ is equivalent to a point with $z=0$, i.e., $\tau \in H_1$, if and only if $\chi_{10}(\tau)=0$,
if and only if the principally polarized abelian surface $\mathbf{A}_{\tau} $ is a product of two elliptic curves  $\mathbf{A}_{\tau} =\mathcal{E}_{\tau_1} \times \mathcal{E}_{\tau_2}$. 
In turn, the transcendental lattice is $T(\mathbf{A}_{\tau}) = H \oplus H$. 

On the other hand, it is known that the vanishing divisor of $Q= 2^{12} \, 3^9 \, \chi_{35}^2 /\chi_{10}$ is the 
Humbert surface $H_4$ \cite{MR1438983}, that is, a period point $\tau$ is equivalent to a point with $\tau_1=\tau_2$, i.e., $\tau \in H_4$,
if and only if $Q(\tau)=0$. In turn, the transcendental lattice degenerates to $T(\mathbf{A}_{\tau}) = H \oplus \langle 2 \rangle \oplus \langle -2 \rangle$. 
Bolza \cite{MR1505464} described the possible automorphism groups of genus-two curves defined by sextics. In particular, 
he proved that a sextic curve $Y^2=F(X)$ defining the genus-two curve $\mathcal{C}$ with $\mathbf{A}_{\tau}=\mathrm{Jac}(C)$
has an extra involution, which then can be represented as $(X,Y) \mapsto (-X,Y)$, if and only if $Q(\tau)=0$.

\subsection{Thetanulls for genus two}
For any $z \in \C^2$ and $\t \in \H_2$ \emph{Riemann's theta function} is defined as
\[ \theta (z , \t) = \sum_{u\in \Z^2} e^{\pi i ( u^t \t u + 2 u^t z )  } \]
where $u$ and $z$ are two-dimensional column vectors and the products involved in the formula are matrix
products. The fact that the imaginary part of $\t$ is positive makes the series absolutely convergent over
any compact sets. Therefore, the function is analytic.  The theta function is holomorphic on $\C^2\times
\H_2$ and satisfies
\[\theta(z+u,\tau)=\theta(z,\tau),\quad \theta(z+u\tau,\tau)=e^{-\pi i( u^t \tau u+2z^t u )}\cdot
\theta(z,\tau),\]
where $u\in \Z^2$; see \cite{MR2352717}   for details. Any point $e \in \operatorname{Jac}(\mathcal{C})$ can be written uniquely as
$e = (b,a) \begin{pmatrix} \mathbb{I}_2 \\ \tau \end{pmatrix}$, where $a,b \in \R^2$.  We shall use the notation $[e] =
\ch{a}{b}$ for the characteristic of $e$. For any $a, b \in \Q^2$, the theta function with rational
characteristics is defined as
\[ \theta  \ch{a}{b} (z , \t) = \sum_{u\in \Z^2} e^{\pi i ( (u+a)^t \t (u+a) + 2 (u+a)^t (z+b) )  }. \]
When the entries of column vectors $a$ and $b$ are from the set $\{ 0,\frac{1}2\}$, then the
characteristics $ \ch {a}{b} $ are called the \emph{half-integer characteristics}. The corresponding theta
functions with rational characteristics are called \emph{theta characteristics}. A scalar obtained by
evaluating a theta characteristic at $z=0$ is called a \emph{theta constant}. Points of order $n$ on $\operatorname{Jac}(\mathcal{C})$
are called the $\frac 1 n$-\emph{periods}. Any half-integer characteristic is given by
\[
\m = \frac{1}2m = \frac{1}2
\begin{pmatrix} m_1 & m_2  \\ m_1^{\prime} & m_2^{\prime} \end{pmatrix} \;,
\]
where $m_i, m_i^{\prime} \in \Z.$  For $\gamma = \ch{\gamma ^\prime}{\gamma^{\prime \prime}} \in
\frac{1}2\Z^4/\Z^4$ we define $e_*(\gamma) = (-1)^{4 (\gamma ^\prime)^t \gamma^{\prime \prime}}.$
Then,
\[ \theta [\gamma] (-z , \t) = e_* (\gamma) \theta [\gamma] (z , \t) \;.\]
We say that $\gamma$ is an  \textbf{even}  (resp.  \textbf{odd}) characteristic if $e_*(\gamma) = 1$ (resp.
$e_*(\gamma) = -1$). 

For any genus-two curve we have six odd theta characteristics and ten even theta characteristics;  see \cite{psw}   for details. 
The following are the sixteen theta characteristics, where the first ten are even and the last six are odd. 
We denote the even theta constants by 
\begin{small}
\[
\begin{split}
\theta_1 = \chr {0}{0}{0}{0} ,  \,  
\theta_2 = \chr {0}{0}{\frac{1}2} {\frac{1}2} ,  \,
\theta_3 &= \chr {0}{0}{\frac{1}2}{0} , \, 
\theta_4 = \chr {0}{0}{0}{\frac{1}2} , \,   \;
\theta_5 = \chr{\frac{1}2}{0}{0}{0} ,\\[5pt]
  \theta_6  = \chr {\frac{1}2}{0}{0}{\frac{1}2} , \, 
  \theta_7 = \chr{0}{\frac{1}2} {0}{0} , \, 
  \theta_8 &= \chr{\frac{1}2}{\frac{1}2} {0}{0} , \, 
  \theta_9 = \chr{0}{\frac{1}2} {\frac{1}2}{0} , \, 
  \theta_{10} = \chr{\frac{1}2}{\frac{1}2} {\frac{1}2}{\frac{1}2} ,
\end{split}
\]
\end{small}
where we write
\begin{equation}
\label{Eqn:theta_short}
 \theta_i(z) \quad \text{instead of} \quad \theta  \ch{a^{(i)}}{b^{(i)}} (z , \tau) \quad \text{with $i=1,\dots ,10$,}
\end{equation}
and $\theta_i =\theta_i(0)$. 
Similarly, the odd theta functions correspond to the following characteristics
\[   \chr{0}{\frac{1}2} {0}{\frac{1}2} , \,
   \chr{0}{\frac{1}2} {\frac{1}2}{\frac{1}2} , \,
    \chr{\frac{1}2}{0} {\frac{1}2}{0} , \, \,
    \chr{\frac{1}2}{\frac{1}2} {\frac{1}2}{0} , \,
    \chr{\frac{1}2}{0} {\frac{1}2}{\frac{1}2} , \,
    \chr{\frac{1}2}{\frac{1}2} {0}{\frac{1}2}  \;.\]
Thetanulls are modular forms of $\mathcal{A}_2(2)$,  and the even theta fourth powers define a compactification of $\mathcal{A}_2(2)$ by $\operatorname{Proj}[\T^4_1: \dots : \T^4_{10}]$,
known as the \emph{Satake compactfication}. $\theta_1, \dots , \theta_4$ are called \textit{fundamental thetanulls}; see \cite{psw} for details. They are determined via the G\"opel systems. 
We have the following Frobenius identities relating the remaining theta constants to the fundamental thetanulls
\begin{equation}
\label{Eq:FrobeniusIdentities}
\begin{array}{lllclll}
 \theta_5^2 \theta_6^2 & = & \theta_1^2 \theta_4^2 - \theta_2^2 \theta_3^2 \,, &\qquad
 \theta_5^4 + \theta_6^4 & =& \theta_1^4 - \theta_2^4 - \theta_3^4 + \theta_4^4 \,, \\[0.2em]
 \theta_7^2 \theta_9^2 & = & \theta_1^2 \theta_3^2 - \theta_2^2 \theta_4^2 \,, &\qquad
 \theta_7^4 + \theta_9^4 &= & \theta_1^4 - \theta_2^4 + \theta_3^4 - \theta_4^4 \, , \\[0.2em]
 \theta_8^2 \theta_{10}^2 & = & \theta_1^2 \theta_2^2 - \theta_3^2 \theta_4^2 \, , &\qquad
 \theta_8^4 + \theta_{10}^4 & = & \theta_1^4 + \theta_2^4 - \theta_3^4 - \theta_4^4 \,, 
\end{array}
\end{equation}
as well as the following mixed relations
\begin{equation}
\label{Eq:FrobeniusIdentitiesb}
\begin{array}{lllclll}
 \theta_5^2 \theta_9^2 & = & \theta_3^2 \theta_8^2 - \theta_4^2 \theta_{10}^2 \,,&\qquad
 \theta_5^2 \theta_7^2 & = & \theta_1^2 \theta_8^2 - \theta_2^2 \theta_{10}^2 .
\end{array}
\end{equation}

Let a genus-two curve $\mathcal{C}$ be given by 
\begin{equation} \label{Rosen2}
Y^2= F(X) = X(X-1)(X-\lambda_1)(X-\lambda_2)(X-\lambda_3).
\end{equation} 
The ordered tuple $(\lambda_1, \lambda_2, \lambda_3)$ where the $\lambda_i$ are all distinct and different from $0, 1, \infty$
determines a point in $\mathcal{M}_2(2)$, the moduli space of genus-two curves together with a level-two structure, 
and, in turn, a level-two structure on the corresponding Jacobian variety, i.e., a point in the moduli space of principally polarized abelian surfaces
with level-two structure $\mathcal{A}_2(2)$.  As functions on $\mathcal{M}_2(2)$, the Rosenhain invariants generate its coordinate ring $\mathbb{C}(\lambda_1, \lambda_2, \lambda_3)$ and 
hence generate the function field of $\mathcal{A}_2(2)$.

The three $\lambda$-parameters in the Rosenhain normal~(\ref{Rosen2}) 
can be expressed as ratios of even theta constants by Picard's lemma. There are 720 choices for such expressions since
the forgetful map $\mathcal{M}_2(2) \to \mathcal{M}_2$ is a Galois covering of degree $720 = |S_6|$ since $S_6$ acts on the 
roots of $F$ by permutations. Any of the $720$ choices may be used, we picked the one from \cite{MR2367218}:
%
%
\begin{lem} If $\mathcal{C}$ is a genus-two curve with period matrix $\tau$ and $\chi_{10}(\tau)\not =0$, then $\mathcal{C}$  is equivalent to the 
curve~(\ref{Rosen2}) with Rosenhain parameters $\lambda_1, \lambda_2, \lambda_3$  given by \begin{equation}\label{Picard}
\lambda_1 = \frac{\theta_1^2\theta_3^2}{\theta_2^2\theta_4^2} \,, \quad \lambda_2 = \frac{\theta_3^2\theta_8^2}{\theta_4^2\theta_{10}^2}\,, \quad \lambda_3 =
\frac{\theta_1^2\theta_8^2}{\theta_2^2\theta_{10}^2}\,.
\end{equation}
Conversely, given three distinct complex numbers $(\lambda_1, \lambda_2, \lambda_3)$ different from $0, 1, \infty$ there is 
complex abelian surface $\mathbf{A}_{\tau}$ with period matrix $[\mathbb{I}_2 | \tau]$
such that $\mathbf{A}_{\tau}=\operatorname{Jac} (\mathcal{C})$ where $\mathcal{C}$ is the genus-two curve with period matrix $\tau$.
\end{lem}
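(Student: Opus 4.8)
The plan is to prove the forward direction with \emph{Thomae's formula}, which for a hyperelliptic curve expresses the fourth powers of the even theta constants as products of differences of the branch points, and then to obtain the converse from the Jacobian construction together with the non-vanishing criterion for $\chi_{10}$ recalled above.

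First I would fix, once and for all, the dictionary between the six branch points $0,1,\lambda_1,\lambda_2,\lambda_3,\infty$ of the curve~(\ref{Rosen2}) and the half-integer characteristics of $\operatorname{Jac}(\mathcal{C})$. After choosing a base point and the symplectic homology basis fixed in the excerpt, the Abel--Jacobi image of each Weierstrass point is an odd characteristic (a half-period), while each of the ten even characteristics $\theta_1,\dots,\theta_{10}$ listed above is matched to a partition of the branch set into two triples, i.e.\ to a three-element subset $U\subset\{0,1,\lambda_1,\lambda_2,\lambda_3,\infty\}$ taken up to complementation. Reading off which partition corresponds to each of the explicit characteristics $\theta_1,\dots,\theta_{10}$ is the decisive bookkeeping step on which the precise shape of~(\ref{Picard}) depends.

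Next I would apply Thomae's formula: for the even characteristic attached to the partition $U\sqcup U^{c}$,
\[
 \theta[\eta_U]^4 \;=\; c \cdot \!\!\prod_{\substack{i<j\\ i,j\in U}}\!\!(a_i-a_j)\;\prod_{\substack{i<j\\ i,j\in U^{c}}}\!\!(a_i-a_j),
\]
where the constant $c$ depends only on $\tau$ and the chosen basis and is common to all even characteristics. Forming the ratio $\theta_1^4\theta_3^4/(\theta_2^4\theta_4^4)$, the factor $c$ cancels and, after substituting the branch values and discarding the factors involving $\infty$ by the usual projective normalization, the right-hand side collapses to $\lambda_1^2$; the analogous computations yield $\lambda_2^2$ and $\lambda_3^2$ for the other two quotients. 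Extracting square roots then gives~(\ref{Picard}), provided the branch of each root is fixed consistently; this I would pin down by specializing to a symmetric configuration, or a controlled degeneration, where the signs are unambiguous, and cross-checking against the Frobenius relations~(\ref{Eq:FrobeniusIdentities})--(\ref{Eq:FrobeniusIdentitiesb}).

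For the converse, three distinct values $\lambda_1,\lambda_2,\lambda_3\notin\{0,1,\infty\}$ make the sextic $F$ in~(\ref{Rosen2}) separable, so $\mathcal{C}$ is a smooth genus-two curve; its normalized period matrix $\tau$ then defines the principally polarized abelian surface $\mathbf{A}_\tau=\operatorname{Jac}(\mathcal{C})$. Since the theta divisor of a Jacobian of a smooth curve is irreducible, $\operatorname{Jac}(\mathcal{C})$ is not a product of elliptic curves, so $\tau\notin H_1$ and $\chi_{10}(\tau)\neq 0$ by the criterion recalled above; the forward direction therefore applies and recovers the prescribed $\lambda_i$, closing the loop. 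The main obstacle lies entirely in the forward direction: getting the correspondence between the ten listed even characteristics and the partitions of the branch set exactly right, and choosing the square-root branches compatibly, so that Thomae's formula returns precisely the quotients in~(\ref{Picard}); once this dictionary is correct the remainder is a finite, mechanical verification.
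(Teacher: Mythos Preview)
The paper does not supply its own proof of this lemma: it is stated as a known result (Picard's lemma), with the particular choice among the $720$ expressions taken from the cited reference~\cite{MR2367218}. There is therefore no argument in the paper to compare against.

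Your proposal via Thomae's formula is the standard classical route and is correct in outline. Thomae expresses each $\theta[\eta_U]^4$ as a universal constant times the product of branch-point differences associated to the partition $U\sqcup U^c$, so in any ratio of fourth powers the constant cancels and one is left with a rational expression in the $\lambda_i$; extracting square roots and fixing signs by specialization (or by a numerical check at one $\tau$) then yields~\eqref{Picard}. The one genuinely delicate point is exactly the one you flag: the dictionary between the ten explicit characteristics $\theta_1,\dots,\theta_{10}$ and the partitions of $\{0,1,\lambda_1,\lambda_2,\lambda_3,\infty\}$ depends on the chosen symplectic basis and base point, and getting it right is what determines the precise form of the quotients. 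In practice this is either imported from a reference, as the paper does, or verified once at a generic numerical $\tau$; the Frobenius identities~\eqref{Eq:FrobeniusIdentities}--\eqref{Eq:FrobeniusIdentitiesb} give a useful consistency check but do not by themselves pin down the dictionary. Your converse argument is also sound.
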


\subsection{Igusa functions and Siegel modular forms} 
Let $I_2, \dots , I_{10}$ denote Igusa invariants of the binary sextic $Y^2=F(X)$ as defined in \cite{SV}*{Eq.~\!9} and explicitly given 
by Equations~(\ref{IgRos}) in the appendix for a curve in Rosenhain normal form~(\ref{Rosen2}).  The Igusa functions or absolute invariants are defined as 
\[  (j_1, j_2, j_3 ) = \left( \frac {I_2^5} {I_{10}}, \frac {I_4 I_2^3} {I_{10}}, \frac {I_6 I_2^2} {I_{10}}  \right) \;. \]
Two genus-two curves $\mathcal{C}$ and $\mathcal{C}^\prime$ are isomorphic if and only if 
\[ \left( j_1, j_2, j_3 \right) = \left( j_1^\prime, j_2^\prime, j_3^\prime \right)\;. \] 
Moreover, $j_1, j_2, j_3$ are given as rational functions of fourth powers of the fundamental theta functions $\theta_1, \dots , \theta_4$. 

The even Siegel modular forms of $\mathcal{A}_2$ form a polynomial ring in four free generators of degrees $4$, $6$, $10$ and $12$ 
denoted by $\psi_4, \psi_6, \chi_{10}$ and $\chi_{12}$, respectively.
Igusa \cite{MR0229643}*{p.~\!848} proved that the relation between the Igusa invariants of the binary sextic $Y^2=F(X)$ defining a genus-two
curve $\mathcal{C}$ with period matrix $\tau$ and the even Siegel modular forms are as follows:
\begin{equation}
\label{invariants}
\begin{split}
 I_2(F) & = -2^3 \cdot 3 \, \dfrac{\chi_{12}(\tau)}{\chi_{10}(\tau)} \;, \\
 I_4(F) & = \phantom{-} 2^2 \, \psi_4(\tau) \;,\\
 I_6(F) & = -\frac{2^3}3 \, \psi_6(\tau) - 2^5 \,  \dfrac{\psi_4(\tau) \, \chi_{12}(\tau)}{\chi_{10}(\tau)} \;,\\
 I_{10}(F) & = -2^{14} \, \chi_{10}(\tau) \;.
\end{split}
\end{equation}
Notice that the Igusa invariant $I_{10}$ is the discriminant of the sextic $Y^2=F(X)$, i.e., $\Delta_F =   I_{10}(F)$.
Conversely, for $r\not = 0$ the point $[I_2 : I_4 : I_6 : I_{10}]$ in weighted projective space equals
\begin{small}
\begin{equation*}
\label{IgusaClebschProjective}
\begin{split}
 \Big[ 2^3 \, 3 \, (3r\chi_{12})\, : \, 2^2 3^2 \, \psi_4 \,  (r\chi_{10})^2 \, : \, 2^3\, 3^2\, \Big(4 \psi_4  \, (3r\chi_{12})+ \psi_6 \,(r\chi_{10}) \Big)\, (r\chi_{10})^2: 2^2 \,  (r\chi_{10})^6 \Big].
 \end{split}
\end{equation*}
\end{small}
Furthermore, Igusa showed in \cite{MR0229643} that for the full
ring of modular forms of $\mathcal{A}_2$, one needs an additional generator $\chi_{35}$ which is algebraically dependent on the others. In fact, its square can be written as follows:
\begin{small}
\begin{equation}\label{chi35sqr}
\begin{split}
\chi_{35}^2 & = \frac{1}{2^{12} \, 3^9} \; \chi_{10} \,  \Big(  
2^{24} \, 3^{15} \; \chi_{12}^5 - 2^{13} \, 3^9 \; \psi_4^3 \, \chi_{12}^4 - 2^{13} \, 3^9\; \psi_6^2 \, \chi_{12}^4 + 3^3 \; \psi_4^6 \, \chi_{12}^3 \\
& - 2\cdot 3^3 \; \psi_4^3 \, \psi_6^2 \, \chi_{12}^3 - 2^{14}\, 3^8 \; \psi_4^2 \, \psi_6 \, \chi_{10} \, \chi_{12}^3 -2^{23}\, 3^{12} \, 5^2\, \psi_4 \, \chi_{10}^2 \, \chi_{12}^3  + 3^3 \, \psi_6^4 \, \chi_{12}^3\\
& + 2^{11}\,3^6\,37\,\psi_4^4\,\chi_{10}^2\,\chi_{12}^2+2^{11}\,3^6\,5\cdot 7 \, \psi_4 \, \psi_6^2\, \chi_{10}^2 \, \chi_{12}^2 -2^{23}\, 3^9 \, 5^3 \, \psi_6\, \chi_{10}^3 \, \chi_{12}^2 \\
& - 3^2 \, \psi_4^7 \, \chi_{10}^2 \, \chi_{12} + 2 \cdot 3^2 \, \psi_4^4 \, \psi_6^2 \, \chi_{10}^2 \, \chi_{12} + 2^{11} \, 3^5 \, 5 \cdot 19 \, \psi_4^3 \, \psi_6 \, \chi_{10}^3 \, \chi_{12} \\
&  + 2^{20} \, 3^8 \, 5^3 \, 11 \, \psi_4^2 \, \chi_{10}^4 \, \chi_{12} - 3^2 \, \psi_4 \, \psi_6^4 \, \chi_{10}^2 \, \chi_{12} + 2^{11} \, 3^5 \, 5^2 \, \psi_6^3 \, \chi_{10}^3 \, \chi_{12}  - 2 \, \psi_4^6 \, \psi_6 \, \chi_{10}^3 \\
 & - 2^{12} \, 3^4 \, \psi_4^5 \, \chi_{10}^4 + 2^2 \, \psi_4^3 \, \psi_6^3 \, \chi_{10}^3 + 2^{12} \, 3^4 \, 5^2 \, \psi_4^2 \, \psi_6^2 \, \chi_{10}^4 + 2^{21} \, 3^7 \, 5^4 \, \psi_4 \, \psi_6 \, \chi_{10}^5 \\
 & - 2 \, \psi_6^5 \, \chi_{10}^3 + 2^{32} \, 3^9 \, 5^5 \, \chi_{10}^6 \Big) \;.
\end{split}
\end{equation}
\end{small}
Hence, $Q= 2^{12} \, 3^9 \, \chi_{35}^2 /\chi_{10}$ is a polynomial of degree $60$
in the even generators.


\subsection{The Satake coordinate functions}

For a symplectic matrix $T \in \mathrm{Sp}_4(\Z)$, there is an induced action on the characteristics of the theta constants 
$\m \mapsto T \cdot \m$ such that the characteristic  $T \cdot \m$ has the same parity as $\m$ and $T \cdot \m =\m$ if $T \equiv \mathbb{I} (2)$.
The latter property implies that $\Gamma_2 /\Gamma_2(2) \cong \operatorname{Sp}_4(\mathbb{F}_2)$ acts on the characteristics.
It turns out that this action is transitive on the six odd characteristics and gives an isomorphism between the permutation group $S_6$
and $\operatorname{Sp}_4(\mathbb{F}_2)$ \cite{MR2744034}.

On any function $f: \mathbb{H}_2 \to \mathbb{C}$, a right action of $T =  \bigl(\begin{smallmatrix}
a&b\\ c&d
\end{smallmatrix} \bigr)  \in \mathrm{Sp}_4(\R)$ is given by setting $f\circ[T](\tau):=\det(c\tau+d)^{-2} \, f(T\cdot \tau)$.
It then follows that  $\T^4_\m\circ[T^{-1}]=\pm \T^4_{T \cdot \m}$ for all $T \in \mathrm{Sp}_4(\Z)$ with $\Gamma_2(2)$ acting trivially.
Thus, $S_6$ acts on the vector space $\mbox{Mat}_2 \left( \Gamma (2) \right)$ spanned by the ten even theta fourth powers.
The vector space $\mbox{Mat}_2 \left( \Gamma (2) \right)$ is a five-dimensional vector space, and we will use the set of theta functions
\[ \left\{ \T_1^4,  \T_2^4,  \T_3^4 , \T_4^4, \T_5^4 \right\} \]
as a basis for the space $\mbox{Mat}_2 \left( \Gamma_2(2) \right)$.  In fact, the other
even theta fourth powers are represented in terms of this basis as
\begin{equation}
\label{theta_reduction}
\begin{split}
 \T_6^4 & = \T_1^4 - \T_2^4 - \T_3^4 + \T_4^4 - \T_5^4, \\
 \T_7^4 & = \T_3^4 - \T_4^4 + \T_5^4 ,\\
 \T_8^4 & = \T_2^4 - \T_4^4 + \T_5^4 ,\\
 \T_9^4 & = \T_1^4 - \T_2^4 - \T_5^4 ,\\
 \T_{10}^4 & = \T_1^4 - \T_3^4 - \T_5^4 .
\end{split}
\end{equation}
If we set $u_k = \sum_\m \T_\m^{4k}$ it can be checked using the Frobenius identities~(\ref{Eq:FrobeniusIdentities}) 
that $u_2^2 = 4u_4$; see \cite{MR2744034}. Therefore, Equations~(\ref{theta_reduction}) realize the Satake compactification
of $\mathcal{A}_2(2)$ as the quartic threefold $u_2^2 = 4u_4$ 
in $\operatorname{Proj}[\T^4_1: \dots : \T^4_5]$.

The following functions are linear combinations of the fourth-powers of even generators
and are called \emph{level-two Satake coordinate functions} 
\begin{equation}
\label{Satake2Theta}
\begin{split}
x_1 & = -\T_1^4 + 2 \, \T_2^4 +2 \, \T_3^4 - \T_4^4+ 3 \, \T_5^4,\\
x_2 & = -\T_1^4 + 2 \, \T_2^4 - \, \T_3^4 - \T_4^4, \\
x_3 & = -\T_1^4-\T_2^4 -\T_3^4+ 2\, \T_4^4,\\
x_4 & = 2 \, \T_1^4-\T_2^4 -\T_3^4- \T_4^4, \\
x_5 & = - \T_1^4-\T_2^4 + 2\,\T_3^4- \T_4^4, \\
x_6 & = 2 \, \T_1^4 - \T_2^4 - \T_3^4 + 2 \, \T_4^4- 3 \, \T_5^4.
\end{split}
\end{equation}
It is obvious that $\sum_i x_i =0$.
A direct computation also shows that the group $S_6$ acts on $(x_1, \dots, x_6)$ by permutation \cite{MR2744034}.

We have the following lemma:
\begin{lem}
For $\tau \in \mathcal{A}_2(2)$ with $\chi_{10}(\tau) \not =0$ the level-two Satake coordinate functions $x_1, \dots x_6$ determine 
a curve in Rosenhain normal from~(\ref{Rosen2}) with $\Delta_F =  -2^{14} \, \chi_{10}(\tau)$ and Igusa invariants~(\ref{invariants})
where the Rosenhain roots $(\lambda_1, \lambda_2, \lambda_3)$ are given
by relations
\begin{equation}\label{Picard2}
\lambda_1 = \frac{1}2 + \frac{\T_1^4\T_3^4-\T_7^4\T_9^4}{2 \, \T_2^4\T_4^4}, \quad 
\lambda_2 = \frac{1}2 +  \frac{\T_3^4\T_8^4-\T_5^4\T_9^4}{2\, \T_4^4\T_{10}^4},\quad 
\lambda_3 =  \frac{1}2 + \frac{\T_1^4\T_8^4-\T_5^4\T_7^4}{2 \, \T_2^4\T_{10}^4},
\end{equation}
and
\begin{equation}
\label{Theta2Satake}
\begin{array}{lllclll}
 \T_1^4 & = & - \frac{1}3 \, \left( x_2 + x_3 + x_5\right), &\qquad
 \T_2^4 & = & - \frac{1}3 \, \left( x_3 + x_4 + x_5\right) , \\[0.2em]
 \T_3^4 & = & - \frac{1}3 \, \left( x_2 + x_3 + x_4\right) , &\qquad
 \T_4^4 & = & - \frac{1}3 \, \left( x_2 + x_4 + x_5\right) , \\[0.2em]
 \T_5^4 & = & \phantom{-} \frac{1}3 \, \left( x_1 + x_3 + x_4\right) , &\qquad
 \T_6^4 & = & - \frac{1}3 \, \left( x_1 + x_2 + x_5\right) , \\[0.2em]
 \T_7^4 & = & \phantom{-}  \frac{1}3 \, \left( x_1 + x_4 + x_5\right) , &\qquad
 \T_8^4 & = & \phantom{-}  \frac{1}3 \, \left( x_1 + x_2 + x_4\right) , \\[0.2em]
 \T_9^4 & = &  - \frac{1}3 \, \left( x_1 + x_2 + x_3\right) , &\qquad
 \T_{10}^4 & =  & - \frac{1}3 \, \left( x_1 + x_3 + x_5\right) .
\end{array}
\end{equation}
\end{lem}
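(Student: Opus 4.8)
The plan is to deduce all three assertions from material already in place---the Picard formulas~(\ref{Picard}), the Frobenius identities~(\ref{Eq:FrobeniusIdentities}) and~(\ref{Eq:FrobeniusIdentitiesb}), the reduction relations~(\ref{theta_reduction}), and Igusa's dictionary~(\ref{invariants})---rather than by any new analytic input. I would first establish the Rosenhain expressions~(\ref{Picard2}) by rewriting~(\ref{Picard}). Since $\T_i=\theta_i$, subtracting $\tfrac12$ from the first Picard formula and clearing denominators gives
\[
\lambda_1-\frac{1}{2}=\frac{\theta_1^2\theta_3^2}{\theta_2^2\theta_4^2}-\frac{1}{2}=\frac{2\,\theta_1^2\theta_3^2\,\theta_2^2\theta_4^2-\theta_2^4\theta_4^4}{2\,\theta_2^4\theta_4^4}.
\]
The key step is to complete the square in the numerator, writing it as $\theta_1^4\theta_3^4-(\theta_1^2\theta_3^2-\theta_2^2\theta_4^2)^2$, and then to invoke the Frobenius identity $\theta_7^2\theta_9^2=\theta_1^2\theta_3^2-\theta_2^2\theta_4^2$ from~(\ref{Eq:FrobeniusIdentities}), so that $(\theta_1^2\theta_3^2-\theta_2^2\theta_4^2)^2=\theta_7^4\theta_9^4$. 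This reproduces the first line of~(\ref{Picard2}) exactly; note that only the squared relation enters, so no choice of square root for $\theta_7^2\theta_9^2$ is needed. The expressions for $\lambda_2$ and $\lambda_3$ follow by the identical manipulation, using instead the two mixed relations $\theta_5^2\theta_9^2=\theta_3^2\theta_8^2-\theta_4^2\theta_{10}^2$ and $\theta_5^2\theta_7^2=\theta_1^2\theta_8^2-\theta_2^2\theta_{10}^2$ of~(\ref{Eq:FrobeniusIdentitiesb}).

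Next I would prove the inversion~(\ref{Theta2Satake}), which is a finite linear-algebra computation. The defining system~(\ref{Satake2Theta}) expresses each $x_i$ as an integer combination of the basis $\T_1^4,\dots,\T_5^4$, and one checks directly that suitable triples isolate a single basis vector; for example $x_2+x_3+x_5=-3\,\T_1^4$ and $x_1+x_3+x_4=3\,\T_5^4$, which already give the first five entries of~(\ref{Theta2Satake}). The remaining values $\T_6^4,\dots,\T_{10}^4$ are then obtained by substituting these five inversions into the reduction relations~(\ref{theta_reduction}); for instance $\T_6^4=\T_1^4-\T_2^4-\T_3^4+\T_4^4-\T_5^4$ collapses to $-\tfrac13(x_1+x_2+x_5)$ after cancellation. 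The constraint $\sum_i x_i=0$ ensures that the overdetermined system is consistent.

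Finally, the claims about the discriminant and the Igusa invariants are automatic once the $\lambda_i$ are fixed: the roots determine the sextic $Y^2=F(X)$ of~(\ref{Rosen2}), whose Igusa invariants are given by the appendix formulas~(\ref{IgRos}) and matched to the Siegel modular forms through Igusa's relations~(\ref{invariants}); in particular $\Delta_F=I_{10}(F)=-2^{14}\,\chi_{10}(\tau)$ is the last line of~(\ref{invariants}). I expect the only real effort to be bookkeeping---verifying all ten lines of~(\ref{Theta2Satake}) and checking that each completed-square rewriting matches the stated numerator---rather than any conceptual obstacle, since every ingredient is already in hand from the preceding subsections.
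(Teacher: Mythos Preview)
Your proposal is correct and follows essentially the same approach as the paper's own proof, which is a one-sentence sketch: use the Frobenius identities to rewrite the Picard expressions for the Rosenhain roots in terms of fourth powers of theta constants, and invert the linear system~(\ref{Satake2Theta}) to obtain~(\ref{Theta2Satake}). Your completing-the-square manipulation is exactly the mechanism by which the Frobenius identities convert the squares in~(\ref{Picard}) into the fourth powers in~(\ref{Picard2}), and your observation that the discriminant and Igusa-invariant claims follow immediately from~(\ref{invariants}) is implicit in the paper's statement.
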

\begin{proof}
The proof follows when using the Frobenius identities~(\ref{Eq:FrobeniusIdentities}) to 
re-write the Rosenhain roots in terms of fourth powers of theta functions and solving
Equations~(\ref{Satake2Theta}) for $\T_1^4, \dots, \T_{10}^4$.
\end{proof}

Define the $j$-th power sums $s_j$ are defined by 
\[ s_j = \sum_{i=1}^6 x_i^j \;. \]
Apart from the obvious identity $s_1=0$, the relation $u_2^2 = 4 \, u_4$ implies the only other relation $s_2^2 = 4 \, s_4$.
Therefore, Equations~(\ref{Satake2Theta}) define an embedding of the Satake compactfication
of $\mathcal{A}_2(2)$, into $\mathbb{P}^5 \ni [x_1:x_2:x_3:x_4:x_5:x_6]$. The image 
in $\mathbb{P}^5$, known as the \emph{Igusa quartic}, is the intersection of the hyperplane $s_1=0$ and the quartic hypersurface $s_2^2 = 4 \, s_4$.

We have the following lemma describing the descent to the Igusa invariants:
 \begin{lem}
 We have the following relations between $s_2, s_3, s_5, s_6$ and the Igusa invariants
 \begin{equation}
 \label{Satake2Igusa}
 \begin{split}
  s_2 & = 3 \, I_4, \\
   s_3 & = \frac32 I_2 I_4 - \frac{9}2 I_6, \\
   s_5 & = \frac{15}{8} I_2 I_4^2 - \frac{45}{8} I_4 I_6 + 1215 \, I_{10}, \\
  s_6 & = \frac{27}{16} I_4^3 + \frac3{8} I_2^2 I_4^2 - \frac{9}4 I_2 I_4 I_6 + \frac{27}{8} I_6^2 + \frac{729}4 I_2 I_{10} \;,\qquad
 \end{split}
 \end{equation}
 and, conversely,
 \begin{equation}
 \begin{split}
  I_2 & = \frac53 \frac{3 \, s_2^3 + 8 s_3^2 - 48 s_6}{5 s_2 s_3 - 12 s_5}, \\
   I_4 & = \frac{1}3 s_2, \\
  I_6 & = \frac{1}{27} \frac{15 \, s_2^4 + 10 s_2 s_3^2-240 s_2 s_6+ 72 s_3 s_5}{5 \, s_2 s_3 - 12 s_5},\\
  I_{10} & = - \frac{1}{2916} s_2 s_3 + \frac{1}{1215} s_5 .
 \end{split}
 \end{equation}
 \end{lem}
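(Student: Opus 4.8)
The plan is to recognize each power sum $s_j$ as a Siegel modular form on the \emph{full} group $\Gamma_2$ and then pin it down by weight. Each Satake coordinate function $x_i$ in~(\ref{Satake2Theta}) is a linear combination of the theta fourth powers $\T_\m^4$, hence a holomorphic modular form of weight $2$ on $\mathcal{A}_2(2)$. Since $S_6\cong\Gamma_2/\Gamma_2(2)$ permutes the $x_i$, the symmetric combination $s_j=\sum_{i=1}^6 x_i^j$ is $S_6$-invariant and therefore descends to a holomorphic Siegel modular form of weight $2j$ on $\Gamma_2$. By Igusa's structure theorem the even subring is the polynomial ring $\mathbb{C}[\psi_4,\psi_6,\chi_{10},\chi_{12}]$, so each $s_j$ must lie in the weight-$2j$ graded piece: since there is no holomorphic form of weight $2$ and $\dim M_4=\dim M_6=1$, one necessarily has $s_2=c_2\,\psi_4$ and $s_3=c_3\,\psi_6$, while $\dim M_{10}=2$ and $\dim M_{12}=3$ force $s_5=a_5\,\psi_4\psi_6+b_5\,\chi_{10}$ and $s_6=a_6\,\psi_4^3+b_6\,\psi_6^2+c_6\,\chi_{12}$, with only finitely many undetermined constants whose number matches the dimensions exactly.

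To fix these constants I would pass to the explicit model on $\mathcal{A}_2(2)$. On the Satake quartic the generators $\psi_4,\psi_6,\chi_{10},\chi_{12}$ are homogeneous polynomials in the coordinates $\T_1^4,\dots,\T_5^4$ and the $x_i$ are the explicit linear forms~(\ref{Satake2Theta}); using the reduction~(\ref{theta_reduction}) together with the single quartic relation $u_2^2=4u_4$ (equivalently $s_2^2=4s_4$) both sides of each proposed identity collapse to elements of $\mathbb{C}[\T_1^4,\dots,\T_5^4]$, and comparing them determines the constants. Equivalently, and avoiding the theta expansions of the Siegel generators, I would parametrize by the Rosenhain roots: (\ref{Theta2Satake}) and~(\ref{Picard2}) express the $\lambda_i$ as rational functions of the $x_j$, the appendix formulas~(\ref{IgRos}) give $I_2,I_4,I_6,I_{10}$ as polynomials in the $\lambda_i$, and the overall normalization (i.e.\ the weight) is pinned by the dictionary~(\ref{invariants}). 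Feeding~(\ref{invariants}) into the resulting expressions then rewrites each $s_j$ in the stated form; the cancellation of the $\chi_{12}/\chi_{10}$ poles carried by $I_2$ is automatic and simply records the holomorphy of $s_j$.

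For the converse I would invert the system~(\ref{Satake2Igusa}) directly. The first relation gives $I_4=\tfrac13 s_2$ at once, and the combination $s_5-\tfrac54 I_4\,s_3=1215\,I_{10}$ isolates $I_{10}$, yielding the stated polynomial $I_{10}=-\tfrac{1}{2916}s_2 s_3+\tfrac{1}{1215}s_5$. Using the $s_3$-relation to eliminate $I_6=\tfrac13 I_2 I_4-\tfrac29 s_3$ and substituting into the $s_6$-relation, all terms quadratic in $I_2$ cancel, leaving an equation linear in $I_2$ with coefficient $\tfrac{729}4 I_{10}$; this produces $I_2$ as a rational function whose denominator is proportional to $I_{10}$, and one computes $5 s_2 s_3-12 s_5=-14580\,I_{10}$, which matches the denominators in the stated formulas. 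Back-substitution then recovers $I_6$. The denominator is nonzero precisely because we assume $\chi_{10}(\tau)\neq0$, i.e.\ $I_{10}\neq0$, which is also why the inverse is genuinely a rational (not polynomial) map.

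The conceptual steps above are short; the real labor, and the main obstacle, is the coefficient determination in the forward direction. One must carry out the reduction to $\mathbb{C}[\T_1^4,\dots,\T_5^4]$ (or to rational functions of the Rosenhain parameters) using the precise normalizations relating $\psi_4,\psi_6,\chi_{10},\chi_{12}$ to theta constants, keeping careful track of the powers of $2$ and $3$ that ultimately generate the coefficients $\tfrac32,\tfrac92,\tfrac{15}{8},1215,\dots$ appearing in~(\ref{Satake2Igusa}). Once the normalizations are fixed, each identity becomes a weighted-homogeneous polynomial identity in five variables that can be verified mechanically.
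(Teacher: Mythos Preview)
Your argument is correct, and in particular your inversion of the system for the converse direction is carried out cleanly; the cancellation of the $I_2^2$ terms in the $s_6$-relation and the identity $5s_2s_3-12s_5=-14580\,I_{10}$ are exactly what make the stated rational formulas come out. The paper's own proof consists of the single sentence ``Using the definition of the Igusa invariants we prove the lemma by explicit computation,'' so there is no detailed route to compare against.

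What you add on top of that bare computation is the modular-forms reduction: observing that $S_6$-invariance forces each $s_j$ into the weight-$2j$ piece of $\mathbb{C}[\psi_4,\psi_6,\chi_{10},\chi_{12}]$, so that only the constants $c_2,c_3,a_5,b_5,a_6,b_6,c_6$ remain to be determined. This is a genuine organizational improvement over a blind symbolic check, since it explains \emph{a priori} why the right-hand sides of~(\ref{Satake2Igusa}) have the shape they do (and why, for instance, no $\chi_{10}$ can appear in $s_6$ once you convert via~(\ref{invariants})). The cost is that you must still fix those seven constants by some explicit evaluation, and your proposal to do this via the Rosenhain parametrization through~(\ref{Theta2Satake}),~(\ref{Picard2}) and~(\ref{IgRos}) is the same computation the paper is gesturing at. One small comment: your description of that step reads slightly backwards (you describe computing $I_n$ from the $x_j$ rather than $s_j$ from the $\lambda_i$), but since the claim is an identity of functions on the Igusa quartic either direction of verification is legitimate.
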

 \begin{proof}
 Using the definition of the Igusa invariants we prove the lemma by explicit computation.
\end{proof}

\subsection{The Satake sextic}
We combine the level-two Satake functions in another plane sextic curve, called the \emph{Satake sextic}, given by
\[
 f(x) = \prod_{i=1}^6 (x -x_i)
\]
The coefficients of the Satake sextic are polynomials in  $\Z \left[ \frac 1 2 , \frac 1 3 , s_2, s_3, s_4, s_6 \right]$.
In fact, we obtain
\[
 f(x) = x^6 + \sum_{i=1}^6 \frac{(-1)^i}{i!} \mathcal{B}_i(Z) \, x^{6-i} \,
 \]
 where $Z= \lbrace s_1, -s_2, 2! \, s_3, - 3!\, s_4, 4! \, s_5, -5! \, s_6 \rbrace$ and $\mathcal{B}_i(Z)$ is the 
 complete Bell polynomial of order $i$ in the variables contained in $Z$.
The following proposition follows:
\begin{prop}\label{Satake6}
For $\tau \in \mathcal{A}_2$ the level-two Satake coordinate functions $x_1, \dots , x_6$ are the roots of the Satake 
polynomial $f \in \Z \left[ \psi_4, \psi_6, \chi_{10}, \chi_{12}  \right] [x] $  given by
\begin{equation}
\label{SatakeSextic}
 f(x) =  \, \left(x^3- \frac{s_2}{4} \, x - \frac{s_3}{6}\right)^2 + \left( \frac{s_2 s_3}{12} - \frac{s_5}{5} \right) x + \frac{s_2^3}{96} + \frac{s_3^2}{36} - \frac{s_6}{6} ,
 \end{equation}
  or, equivalently,
 \begin{equation}
\label{SatakeSextic_3}
\begin{split}
 f(x) = & \, \left(x^3 - 3 \, \psi_4  \, x - 2 \, \psi_6 \right)^2 + 2^{14}3^5 \, \left( \chi_{10} \, x -3 \chi_{12} \right) .
 \end{split}
 \end{equation}
A genus-two curve  $\mathcal{S}$ is defined by $y^2=f(x)$ iff the discriminant does not vanish, i.e.,
$$
 \Delta_f = I_{10}(f) = 2^{52} 3^{21} \, Q(\tau) \not = 0 \;.
$$
\end{prop}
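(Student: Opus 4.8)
The plan is to prove the three assertions in order: the symmetric-function identity giving~(\ref{SatakeSextic}), its translation into Siegel modular forms giving~(\ref{SatakeSextic_3}), and finally the discriminant identity, which is where the real work lies. First I would derive~(\ref{SatakeSextic}) directly from $f(x)=\prod_{i=1}^6(x-x_i)$ together with the two syzygies $s_1=0$ and $s_2^2=4\,s_4$ already established. Writing $f(x)=x^6-e_1x^5+e_2x^4-e_3x^3+e_4x^2-e_5x+e_6$ and applying Newton's identities, the constraint $s_1=0$ forces $e_1=0$ and gives $e_2=-\tfrac{s_2}{2}$, $e_3=\tfrac{s_3}{3}$, $e_5=-\tfrac{s_2s_3}{6}+\tfrac{s_5}{5}$, while the relation $s_4=\tfrac{s_2^2}{4}$ collapses the remaining two coefficients to $e_4=\tfrac{s_2^2}{16}$ and $e_6=\tfrac{s_2^3}{96}+\tfrac{s_3^2}{18}-\tfrac{s_6}{6}$. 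Recognizing that $x^6-\tfrac{s_2}{2}x^4-\tfrac{s_3}{3}x^3+\tfrac{s_2^2}{16}x^2+\tfrac{s_2s_3}{12}x+\tfrac{s_3^2}{36}=(x^3-\tfrac{s_2}{4}x-\tfrac{s_3}{6})^2$ and absorbing the discrepancy into the residual linear and constant terms reproduces~(\ref{SatakeSextic}) verbatim. This step is entirely routine.

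For~(\ref{SatakeSextic_3}) I would substitute the descent relations~(\ref{Satake2Igusa}) and then Igusa's dictionary~(\ref{invariants}). The relation $s_2=3I_4=12\,\psi_4$ gives $\tfrac{s_2}{4}=3\psi_4$, and a one-line cancellation in $s_3=\tfrac32 I_2I_4-\tfrac92 I_6$ using $I_2=-2^3\cdot 3\,\chi_{12}/\chi_{10}$ yields $s_3=12\,\psi_6$, so the perfect square becomes $(x^3-3\psi_4x-2\psi_6)^2$. It then remains to identify the leftover affine part: eliminating $s_5$ through the inverse relation $I_{10}=-\tfrac{1}{2916}s_2s_3+\tfrac{1}{1215}s_5$ gives $\tfrac{s_2s_3}{12}-\tfrac{s_5}{5}=-243\,I_{10}=2^{14}3^5\chi_{10}$, and the constant term $\tfrac{s_2^3}{96}+\tfrac{s_3^2}{36}-\tfrac{s_6}{6}$ reduces, upon substituting the expression for $s_6$, to $-\tfrac{243}{8}I_2I_{10}=-2^{14}3^6\chi_{12}$; together these give exactly $2^{14}3^5(\chi_{10}x-3\chi_{12})$. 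In particular this exhibits $f\in\Z[\psi_4,\psi_6,\chi_{10},\chi_{12}][x]$, so every coefficient of $f$ is holomorphic.

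The main work is the discriminant identity $\Delta_f=2^{52}3^{21}Q$, and this is where I expect the main obstacle. Since $f$ is monic of degree six, $\Delta_f=\prod_{i<j}(x_i-x_j)^2$ is a symmetric function of the Satake coordinates and hence, after imposing $s_1=0$ and $s_4=\tfrac{s_2^2}{4}$, a weight-$60$ polynomial in $s_2,s_3,s_5,s_6$; via~(\ref{Satake2Igusa}) and~(\ref{invariants}) it becomes an explicit element of $\Z[\psi_4,\psi_6,\chi_{10},\chi_{12}]$. The plan is to compute this polynomial—most efficiently from the compact form~(\ref{SatakeSextic_3}), writing $f=g^2+h$ with $g=x^3-3\psi_4x-2\psi_6$ and $h=2^{14}3^5(\chi_{10}x-3\chi_{12})$ and evaluating $\Delta_f=-\mathrm{Res}(f,f')$—and then to compare it term by term with the degree-$60$ polynomial $Q=2^{12}3^9\chi_{35}^2/\chi_{10}$ read off from~(\ref{chi35sqr}). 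The bookkeeping is heavy, but both sides are weight-$60$ holomorphic forms, so this is a finite polynomial check rather than an analytic argument.

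The overall constant $2^{52}3^{21}$ can be pinned down independently by specialization: setting $\psi_4=\psi_6=\chi_{12}=0$ degenerates $f$ to $x^6+2^{14}3^5\chi_{10}\,x=x\,(x^5+2^{14}3^5\chi_{10})$, whose discriminant is $5^5\,(2^{14}3^5\chi_{10})^6=2^{84}3^{30}5^5\,\chi_{10}^6$, matching $2^{52}3^{21}$ times the leading coefficient $2^{32}3^9 5^5$ of the $\chi_{10}^6$ monomial in $Q$. As a consistency guarantee I would also note that $\Delta_f$ vanishes exactly when two Satake coordinates collide, i.e. when $f$ acquires a repeated root, and that the resulting proportionality with $Q$ locates this degeneration on the Humbert locus $H_4=\{Q=0\}$. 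This simultaneously settles the final clause: $y^2=f(x)$ defines a smooth genus-two curve $\mathcal{S}$ precisely when $f$ has six distinct roots, that is, when $\Delta_f=I_{10}(f)=2^{52}3^{21}\,Q(\tau)\neq 0$.
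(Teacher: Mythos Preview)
Your proposal is correct and follows essentially the same route as the paper's proof: both derive~(\ref{SatakeSextic}) by converting power sums to elementary symmetric functions under the syzygies $s_1=0$, $s_2^2=4s_4$ (the paper phrases this as ``explicit computation of the Bell polynomials,'' you use Newton's identities directly---these are equivalent), then substitute~(\ref{Satake2Igusa}) and~(\ref{invariants}) to obtain~(\ref{SatakeSextic_3}), and finally check the discriminant identity by direct comparison with the degree-$60$ polynomial $Q$ read off from~(\ref{chi35sqr}). Your write-up is considerably more detailed than the paper's, and the specialization $\psi_4=\psi_6=\chi_{12}=0$ to pin down the constant $2^{52}3^{21}$ is a clean independent check that the paper does not make explicit.
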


\begin{proof}
The proof follows from explicit computation of the Bell polynomials and using the relations $s_1=0$ and $s_2^2 = 4 \, s_4$.
One then checks that the discriminant of the polynomial in Equation~(\ref{SatakeSextic}) after using relations~(\ref{Satake2Igusa}) and (\ref{invariants})
coincides up to factor with $Q= 2^{12} \, 3^9 \, \chi_{35}^2 /\chi_{10}$ where $\chi_{35}^2$ was given in Equation~(\ref{chi35sqr}).
\end{proof}


A genus-two curve $\mathcal{C}$ with period matrix $\tau$ determines a Satake sextic polynomial (\ref{SatakeSextic_3}) in Proposition~\!\ref{Satake6}.
Therefore, we get a map $\Phi:  \mathcal C  \mapsto \mathcal S$ by mapping the genus-two curve $\mathcal{C}$ to the Satake sextic 
$y^2=f(x)$ with $f$ given in Equation~(\ref{SatakeSextic}).  Note that this map is defined as a map on the moduli space $\M_2$ if and only if $\chi_{10}(\tau)\not=0$ and 
$\Delta_f= - 2^{14} \chi_{10}(\tau') = 2^{52} 3^{21} Q(\tau)\not=0$ because only then do we have genus-two curves in the domain and range with period matrixes $\tau$ and $\tau'$, respectively.
Though not by explicit formulas it was proved in~\cite{MR2069800} that this map is a rational map of degree 16.  
The following lemma provides the explicit formulas:
\begin{prop}
Let $(j_1, j_2, j_3)$ and $(j_1^\prime, j_2^\prime, j_3^\prime)$ be the absolute invariants of $\mathcal{C}$ and $\mathcal{S}$, respectively.
The map $\Phi: \M_2 \backslash \supp{ (\chi_{35})}_0 \to \M_2 $ with $\mathcal C  \mapsto \mathcal S$ is given by
\begin{equation*}
\label{Transfo}
\begin{split}
 j_1^\prime 	= \frac{64}{729} 	\, \frac{g^{(1)}(j_1,j_2,j_3)}{q(j_1,j_2,j_3)},\quad
 j_2^\prime 	= \frac{4}{729} 	\, \frac{g^{(2)}(j_1,j_2,j_3)}{q(j_1,j_2,j_3)}, \quad
 j_3^\prime 	= \frac{1}{729} 	\, \frac{g^{(3)}(j_1,j_2,j_3)}{q(j_1,j_2,j_3)},
\end{split}
\end{equation*} 
where the polynomials $g^{(n)}$ for $n=1,2,3$ with integer coefficients are given in Equations~(\ref{Components_Phi}) in the appendix,
and the denominator is related to the modular form $Q= 2^{12} \, 3^9 \, \chi_{35}^2 /\chi_{10}$
by the equation 
$$
  Q(\tau) \, I_2^{-30} = 2^{-63} \, j_1^{-15} \, q(j_1,j_2,j_3).
$$
\end{prop}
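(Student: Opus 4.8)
The plan is to compute the absolute invariants of $\mathcal{S}$ directly from the Satake sextic and then trade them for the absolute invariants of $\mathcal{C}$. First I would take $f$ in the form~(\ref{SatakeSextic}), read off its coefficients as explicit polynomials in $s_2, s_3, s_5, s_6$ (having already used $s_1=0$ and $s_2^2=4\,s_4$), and feed this sextic into the defining formulas for the Igusa invariants of a binary sextic given in~\cite{SV}. This produces $I_2(f), I_4(f), I_6(f), I_{10}(f)$ as explicit polynomials in $s_2, s_3, s_5, s_6$; the depressed shape $f=\bigl(x^3-\tfrac{s_2}{4}x-\tfrac{s_3}{6}\bigr)^2 + (\text{linear})$ can be exploited to keep these manageable. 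As a built-in check, the resulting $I_{10}(f)$ must agree with the discriminant $\Delta_f = 2^{52}3^{21}\,Q(\tau)$ already established in Proposition~\ref{Satake6}.

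Next I would eliminate the $s_j$ in favour of the Igusa invariants of $\mathcal{C}$. Since~(\ref{Satake2Igusa}) gives $s_2, s_3, s_5, s_6$ directly as polynomials in $I_2, I_4, I_6, I_{10}$, a single substitution turns each $I_{2k}(f)$ into a weighted-homogeneous polynomial in $I_2, I_4, I_6, I_{10}$, of weight $12, 24, 36, 60$ respectively (where $I_{2\ell}$ carries weight $2\ell$). Forming $j_1'=I_2(f)^5/I_{10}(f)$, $j_2'=I_4(f)\,I_2(f)^3/I_{10}(f)$ and $j_3'=I_6(f)\,I_2(f)^2/I_{10}(f)$ then yields weight-zero rational functions of $I_2, I_4, I_6, I_{10}$. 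Using the identities $I_4/I_2^2 = j_2/j_1$, $I_6/I_2^3 = j_3/j_1$ and $I_{10}/I_2^5 = 1/j_1$ that follow from the definition of $(j_1,j_2,j_3)$, I would homogenise each numerator and the denominator by the appropriate power of $I_2$, convert to $(j_1,j_2,j_3)$, and cancel any common factor. Because all three invariants share the denominator $I_{10}(f)$, they acquire a single common denominator $q$; matching against the displayed constants $\tfrac{64}{729}, \tfrac{4}{729}, \tfrac{1}{729}$ then reads off $g^{(1)}, g^{(2)}, g^{(3)}$ and $q$.

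The normalisation $Q(\tau)\,I_2^{-30} = 2^{-63}\,j_1^{-15}\,q(j_1,j_2,j_3)$ is then a bookkeeping consequence of the check in the first step. Indeed $q$ is, up to a constant and a power of $j_1$, nothing but $I_{10}(f)$ re-expressed in $j$-coordinates, and $I_{10}(f)=\Delta_f = 2^{52}3^{21}\,Q(\tau)$; tracking the powers of $I_2$ and $I_{10}$ used in the homogenisation — which a weight count pins down uniquely, forcing $\mathrm{wt}\,I_2(f)=12$ so that the factors $I_2^{-30}$ and $j_1^{-15}$ balance — yields exactly the stated constant $2^{-63}$ and exponent $-15$.

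The main obstacle will be computational size rather than any conceptual difficulty: the $g^{(n)}$ are high-degree polynomials in three variables and the intermediate expressions for $I_{2k}(f)$, first in the $s_j$ and then in the $I_{2\ell}$, are large, so the computation is realistically run and certified in a computer algebra system. The two delicate points are (i) choosing the homogenisation so that numerators and denominator are genuinely polynomial in $(j_1,j_2,j_3)$ with no spurious shared factor, and (ii) confirming that $q$ vanishes precisely on the image of $H_4$ — equivalently that $\Phi$ is regular exactly on $\mathcal{M}_2\setminus\supp(\chi_{35})_0$ — which is what ties the denominator back to $Q$ and the expression~(\ref{chi35sqr}) for $\chi_{35}^2$; a final sanity check is that the resulting rational self-map of $\mathcal{M}_2$ has degree $16$, as proved in~\cite{MR2069800}.
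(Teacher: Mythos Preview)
Your proposal is correct and follows essentially the same computational route as the paper: compute the Igusa invariants of the Satake sextic, convert to weight-zero ratios, and re-express everything in the absolute invariants $(j_1,j_2,j_3)$ of $\mathcal{C}$, with the identification of the denominator coming from $I_{10}(f)=\Delta_f=2^{52}3^{21}Q(\tau)$.

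The one noteworthy difference is the choice of intermediate coordinates. You pass through the power sums $s_j$ and then the Igusa invariants $I_{2\ell}$ of $\mathcal{C}$, whereas the paper works instead with the Siegel-modular-form presentation~(\ref{SatakeSextic_3}) of $f$ and, after computing $I_{2k}(f)$, inverts Equations~(\ref{invariants}) to obtain $\psi_4(\tau'),\psi_6(\tau'),\chi_{10}(\tau'),\chi_{12}(\tau')$ as explicit polynomials $K,L,Q,M$ (of degrees $24,36,60,12$) in $\psi_4(\tau),\psi_6(\tau),\chi_{10}(\tau),\chi_{12}(\tau)$. This buys two things your approach does not automatically expose: the numerators come \emph{pre-factored} as $g^{(1)}=m^5$, $g^{(2)}=m^3k$, $g^{(3)}=\tfrac{1}{3}m^2(l+4km)$ with $m,k,l$ the $j$-avatars of $M,K,L$, and one gets for free the relation $Q(\tau')=2^{210}3^{132}\,Q(\tau)^3 N(\tau)^2$ governing how $\Phi$ interacts with the Humbert locus $H_4$. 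Your route is slightly more direct to the final formulas; the paper's gives a bit more modular-theoretic structure along the way.
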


\proof
Let $\mathcal{C}$ be a genus-two curve with period matrixes $\tau$.
Similarly, let $\mathcal{S}$ be a genus-two curve with period matrixes $\tau'$ defined by the sextic $y^2=f(x)$ with $f$ given in Equation~(\ref{SatakeSextic_3}).
We compute the Igusa invariants $I_n(f)$ for $n=2, 3, 6, 10$ and use Equations~(\ref{invariants}) to compute $\psi_4(\tau'), \psi_6(\tau'), \chi_{10}(\tau')$ and $\chi_{12}(\tau')$.
It follows that
\begin{equation}
\begin{split}
  \psi_4(\tau') & = \phantom{-}  2^4 3^6 \, K(\tau) ,\\
  \psi_6(\tau') & = -  2^6 3^9 \, L(\tau) ,\\
  \chi_{10}(\tau') & = - 2^{38} 3^{21} \, Q(\tau) ,\\
  \chi_{12}(\tau') & = \phantom{-} 2^{40} 3^{23} \, Q(\tau) \, M(\tau),
\end{split}
\end{equation}
where $K(\tau) = \psi_4(\tau)^6 + \dots$ and $L(\tau) = \psi_6(\tau)^6 + \dots$ are polynomials in the four even generators
of degree $24$ and $36$, respectively, 
$Q(\tau) = 2^{32} 3^9 5^5 \chi_{10}(\tau)^6 + \dots $
is a polynomial of degree $60$, and
$$
M(\tau) = \psi_4(\tau)^3 - \psi_6(\tau)^2 + 2^{13} \, 3^4 \, 5 \, \chi_{12}(\tau) \;.
$$
We also find
$$
 Q(\tau') = 2^{210} 3^{132} \, Q(\tau)^3 \, N(\tau)^2 \;,
$$ 
where $N(\tau)=\psi_4(\tau)^{15} \chi_{10}(\tau)^3 + \dots$ is a polynomial of degree $90$.
Then, the Igusa functions $(j_1^\prime, j_2^\prime, j_3^\prime)$ of the genus-two curve $\mathcal{S}$ are given by
\begin{equation}
\begin{split}
 j_1^\prime 	&= \frac{64}{729} \, \frac{m(j_1,j_2,j_3)^5}{q(j_1,j_2,j_3)},\\
 j_2^\prime 	&= \frac{4}{729} 	\, \frac{m(j_1,j_2,j_3)^3 \, k(j_1,j_2,j_3)}{q(j_1,j_2,j_3)}, \\
 j_3^\prime 	&= \frac{1}{2187} 	\, \frac{m(j_1,j_2,j_3)^2 \, \big( l + 4 \, k \,m\big)(j_1,j_2,j_3)}{q(j_1,j_2,j_3)} ,
\end{split}
\end{equation} 
where the polynomials $m, k, l, q$ with integer coefficients are given by
\begin{equation}
\begin{split}
  M(\tau) \, I_2^{-6}  &= 2^{-6} \, j_1^{-3} \, m(j_1,j_2,j_3) ,\\
  K(\tau) \, I_2^{-12} &= 2^{-12} \, j_1^{-6} \, k(j_1,j_2,j_3) ,\\
 L(\tau) \, I_2^{-18} &= 2^{-18} \, j_1^{-9} \,  l(j_1,j_2,j_3) ,\\
  Q(\tau) \, I_2^{-30}&= 2^{-63} \, j_1^{-15} \, q(j_1,j_2,j_3).
\end{split}
\end{equation} 
\endproof

The roots of the Satake sextic $y^2=f(x)$ determine by means of Equation~\eqref{Picard2} and Equation~\eqref{Theta2Satake} 
the Rosenhaim roots of an equivalent genus-two curve in Rosenhain normal form~(\ref{Rosen2}), with the original Igusa invariants, that is an element in $\M_2(2)$.
The permutation group $S_6$ acts on the roots $(x_1, \dots, x_6)$ by permutation. It is clear that the corresponding action on the sextic curve
in Rosenhain normal form~(\ref{Rosen2}) is the transition between the $720 = |S_6|$ choices of ratios of even theta constants which Picard's lemma allows for 
the three $\lambda$-parameters.

\section{Jacobian elliptic Kummer and Shioda-Inose surfaces}
In this section we describe the construction of the 
Kummer surface $\mathrm{Kum}(\operatorname{Jac}\mathcal{C})$ and the Shioda-Inose surface $\mathrm{SI}(\operatorname{Jac}\mathcal{C})$ 
together with the Jacobian elliptic fibrations on these surfaces which are relevant to the F-theory/heterotic string duality.  

\subsection{Jacobian elliptic fibrations}
A surface is called Jacobian elliptic fibration if it is a (relatively) minimal elliptic surface $\pi: \mathcal{X} \to \mathbb{P}^1$ over $\mathbb{P}^1$
with a distinguished section $S_0$. The complete list of possible singular fibers has been given by Kodaira~\cite{MR0184257}. 
It encompasses two infinite families $(I_n, I_n^*, n \ge0)$ and six exceptional cases $(II, III, IV, II^*, III^*, IV^*)$.
To each Jacobian elliptic fibration $\pi: \mathcal{X} \to \mathbb{P}^1$ there is an associated Weierstrass model $\bar{\pi}: \bar{\mathcal{X}\,}\to \mathbb{P}^1$ 
with a corresponding distinguished section $\bar{S}_0$ obtained by contracting
all components of fibers not meeting $S_0$. The fibers of $\bar{\mathcal{X}\,}$ are all irreducible whose singularities are all rational double points, and $\mathcal{X}$ is the minimal desingularization.
If we choose $t \in \mathbb{C}$ as a local affine coordinate on $\mathbb{P}^1$, we can present $\bar{\mathcal{X}\,}$ in the Weierstrass normal form
\begin{equation}
\label{Eq:Weierstrass}
 Y^2 = 4 \, X^3 - g_2(t) \, X - g_3(t) \;,
\end{equation}
where $g_2$ and $g_3$ are polynomials in $t$ of degree four and six, respectively, because $\mathcal{X}$ is a K3 surface. Type of singular fibers 
can then be read off from the orders of vanishing of the functions $g_2$, $g_3$ and the discriminant $\Delta= g_2^3 - 27 \, g_3^2$ 
at the various singular base values. Note that the vanishing degrees of $g_2$ and $g_3$ are always less or equal three and five, respectively,
as otherwise the singularity of $\bar{X}$ is not a rational double point.

For a family of Jacobian elliptic surfaces $\pi: \mathcal{X} \to \mathbb{P}^1$, the two classes in N\'eron-Severi lattice $\mathrm{NS}(\mathcal{X})$ associated 
with the elliptic fiber and section span a sub-lattice $\mathcal{H}$ isometric to the standard hyperbolic lattice $H$ with the quadratic form $q=x_1x_2$, and we have the following decomposition 
as a direct orthogonal sum
\begin{equation*}
 \mathrm{NS}(\mathcal{X}) = \mathcal{H} \oplus \mathcal{W} \;.
\end{equation*}
The orthogonal complement $T(\mathcal{X}) = \mathrm{NS}(\mathcal{X})^{\perp} \in H^2(\mathcal{X},\mathbb{Z})\cap H^{1,1}(\mathcal{X})$ is
called the transcendental lattice and carries the induced Hodge structure.  

\subsection{The Kummer surface}
For the Jacobian variety $\operatorname{Jac}(\mathcal{C})$ of a genus-two curve $\mathcal{C}$, let $\imath$ be the involution automorphism on $\operatorname{Jac}(\mathcal{C})$ 
given by $\imath: a \mapsto -a$.  The quotient, $\operatorname{Jac}(\mathcal{C})/\lbrace \mathbb{I}, \imath \rbrace$, is 
a singular surface with sixteen ordinary double points.  Its minimum resolution, $\operatorname{Kum}(\operatorname{Jac} \mathcal{C})$, is a special K3 surface called the \emph{Kummer surface} associated to
 $\operatorname{Jac} \mathcal{C}$. 
 
On the Kummer surface $\operatorname{Kum}(\operatorname{Jac} \mathcal{C})$, there are two sets of sixteen $(-2)$-curves, called nodes and tropes, which are either
the exceptional  divisors corresponding to the blowup of the 16 two-torsion points of the Jacobian $\operatorname{Jac}(\mathcal{C})$  or they  arise from the embedding of $\mathcal{C}$ into $\operatorname{Jac}(\mathcal{C})$  as symmetric theta divisors. These two sets of smooth rational curves have a rich symmetry, the so-called $16_6$-configuration, where each node intersects 
exactly six tropes and vice versa \cite{MR1097176}. 

Using curves and symmetries in the $16_6$-configuration one can define various elliptic fibrationson $\operatorname{Kum}(\operatorname{Jac} \mathcal{C})$, 
since all irreducible components of a reducible fiber in an elliptic fibration 
are $(-2)$-curves \cite{MR0184257}. In fact, for the Kummer surface of a generic curve of genus two all inequivalent elliptic fibrations were determined explicitly by Kumar in \cite{MR3263663}. In particular, Kumar computed elliptic parameters and  Weierstrass equations for all twenty five different fibrations that appear, and analyzed the reducible fibers and  Mordell-Weil lattices.

\subsubsection{A first elliptic fibration on $\mathrm{Kum}(\operatorname{Jac}\mathcal{C})$}
Given a genus-two curve $\mathcal{C}$ defined by a sextic $Y^2 =F(X)$,
the Jacobian variety $\operatorname{Jac}(\mathcal{C})$ is birational to the symmetric product of two copies of $\mathcal{C}$, i.e., 
$(\mathcal{C}\times\mathcal{C})/\{ \mathbb{I}, \pi \}$, where we have set
$\pi(X_1)=X_2$ and $\pi(Y_1)=Y_2$. The function field is the sub-field of $\mathbb{C}[X_1,X_2,Y_1,Y_2]$ such that $Y_i^2 =F(X_i)$ for $i=1,2$ which is fixed under 
$\pi$.

The Kummer surface $\mathrm{Kum}(\operatorname{Jac}\mathcal{C})$ is birational to the quotient $\operatorname{Jac}(\mathcal{C})/\{ \mathbb{I}, \imath \}$ 
with $\imath(X_i)=X_i$ and $\imath(Y_i)=-Y_i$ for $i=1,2$. Its function
field is the sub-field of $\mathbb{C}[X_1,X_2,Y_1,Y_2]$ with $Y_i^2 =F(X_i)$ for $i=1,2$ which is fixed under both $\pi$ and $\imath$. 
Thus, the function field of $\mathrm{Kum}(\operatorname{Jac}\mathcal{C})$ is generated by $Y=Y_1Y_2$, $t=X_1X_2$, and $X=X_1+X_2$.
We have the following lemma:
\begin{lem}
The function field of the Kummer surface $\mathrm{Kum}(\operatorname{Jac}\mathcal{C})$ for the genus-two curve $\mathcal{C}$ given by the sextic~(\ref{Rosen2}) 
is generated by $Y, X, t$  subject to the relation $\mathcal{K}(Y,X,t)=0$ with
\begin{equation}
\label{kummer2}
\mathcal{K}(Y,X,t) = Y^2 - t \,  \big( 1 -  X + t\big) \, \big( \lambda_1^2 - \lambda_1 \, X + t\big) \, \big( \lambda_2^2 - \lambda_2 \, X + t\big) \, \big( \lambda_3^2 - \lambda_3 \, X + t\big) .
\end{equation}
\end{lem}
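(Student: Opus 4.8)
The plan is to use the presentation of the Kummer function field as an invariant subfield that is already in place above. Recall that $\mathrm{Kum}(\operatorname{Jac}\mathcal{C})$ is birational to the quotient of $\mathcal{C}\times\mathcal{C}$ by the group $\langle \pi, \imath\rangle$, where $\pi$ interchanges the two factors and $\imath$ sends $Y_i \mapsto -Y_i$ while fixing $X_i$. Its function field is therefore the subfield of $\mathbb{C}(X_1,X_2,Y_1,Y_2)$, subject to $Y_i^2 = F(X_i)$, that is invariant under both involutions, and this subfield is generated by $X = X_1 + X_2$, $t = X_1 X_2$ and $Y = Y_1 Y_2$, as noted just before the lemma. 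The only thing left to determine is the single polynomial relation satisfied by these three generators.

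First I would square the generator $Y$ to obtain
\[
 Y^2 = Y_1^2 Y_2^2 = F(X_1)\, F(X_2),
\]
which is symmetric in $X_1, X_2$ and hence lies in $\mathbb{C}[X,t]$. To evaluate it, I write $F(X) = \prod_r (X - r)$ with $r$ running over the five roots $0, 1, \lambda_1, \lambda_2, \lambda_3$, so that $F(X_1)F(X_2) = \prod_r (X_1 - r)(X_2 - r)$, and then apply the elementary identity
\[
 (X_1 - r)(X_2 - r) = X_1 X_2 - r(X_1 + X_2) + r^2 = r^2 - r\, X + t .
\]
Substituting $r = 0$ yields the factor $t$, $r = 1$ yields $1 - X + t$, and $r = \lambda_i$ yields $\lambda_i^2 - \lambda_i X + t$ for $i = 1,2,3$. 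Multiplying the resulting five factors reproduces exactly the product appearing in $\mathcal{K}$, which establishes $\mathcal{K}(Y,X,t) = 0$.

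It then remains to verify that $\mathcal{K}=0$ is the defining relation, i.e., that the function field equals $\operatorname{Frac}\bigl(\mathbb{C}[X,t,Y]/(\mathcal{K})\bigr)$ and carries no further relation. This follows from a transcendence and irreducibility count: $X$ and $t$ are algebraically independent, being the elementary symmetric functions of the independent coordinates $X_1, X_2$, so $\mathbb{C}(X,t)$ has transcendence degree two and $Y$ is quadratic over it. The five linear factors $t$, $1-X+t$ and $\lambda_i^2 - \lambda_i X + t$ are pairwise non-associate irreducibles in the unique factorization domain $\mathbb{C}[X,t]$, precisely because the roots $0,1,\lambda_1,\lambda_2,\lambda_3$ are distinct, so their product is squarefree and not a perfect square. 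Hence $\mathcal{K} = Y^2 - t(1-X+t)\prod_{i=1}^3(\lambda_i^2 - \lambda_i X + t)$ is irreducible over $\mathbb{C}(X,t)$, and the field it defines is exactly the two-dimensional Kummer function field.

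The computation itself is routine; the only point demanding attention is this last irreducibility step, which guarantees that the quadratic relation is nondegenerate and that the invariant-theoretic degree count matches, so that $\mathcal{K}$ is genuinely the defining equation of $\mathrm{Kum}(\operatorname{Jac}\mathcal{C})$ rather than merely one relation among several.
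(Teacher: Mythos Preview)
Your proof is correct and follows exactly the approach the paper has in mind. The paper does not give an explicit proof of this lemma: it states just before the lemma that the Kummer function field is generated by $Y=Y_1Y_2$, $t=X_1X_2$, $X=X_1+X_2$, and then records the relation as the lemma, treating the computation $Y^2=F(X_1)F(X_2)=\prod_r(r^2-rX+t)$ as routine. Your argument makes this explicit and adds the irreducibility check, which is a welcome bit of rigor the paper omits.
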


Equation~(\ref{kummer2}) defines a Jacobian elliptic fibration
$\bar{\pi}: \bar{\mathcal{X}\,} \to \mathbb{P}^1$ with a distinguished section $\bar{S}_0$  on $\mathcal{X}=\mathrm{Kum}(\operatorname{Jac}\mathcal{C})$ by choosing $t$ as 
the elliptic parameter and the point at infinity in each fiber for the section. In fact, this fibration is well-known and labeled fibration `1' in  \cite{MR3263663}. 
The following lemma is immediate and follows by comparison with the explicit results in \cite{MR3263663}. 
\begin{lem}
\label{lem:EllLeft}
Equation~(\ref{kummer2}) is a Jacobian elliptic fibration $\bar{\pi}: \bar{\mathcal{X}} \to \mathbb{P}^1$ with 6 singular fibers of type $I_2$, 
two singular fibers of type $I_0^*$, and the Mordell-Weil group of sections $\operatorname{MW}(\bar{\pi})=(\mathbb{Z}/2)^2\oplus \langle 1 \rangle$.
\end{lem}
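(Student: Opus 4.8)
The plan is to bypass the comparison with \cite{MR3263663} and instead put the genus-one fibration of Equation~(\ref{kummer2}) directly into Weierstrass form, reading off the singular fibres and the Mordell--Weil group from its discriminant. For fixed generic $t$ the relation $\mathcal{K}(Y,X,t)=0$ has the shape $Y^2=f_t(X)$ with $f_t$ a quartic in $X$ whose leading coefficient $t\,\lambda_1\lambda_2\lambda_3$ is non-zero, so the two points over $X=\infty$ furnish a rational point and the classical transformation of a quartic with a rational point produces a Weierstrass equation $y^2=4x^3-g_2(t)\,x-g_3(t)$ with $g_2,g_3\in\mathbb{C}[t]$ and discriminant $\Delta(t)=g_2^3-27\,g_3^2$. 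Away from the zeros of the leading coefficient, the order of vanishing of $\Delta$ equals that of the quartic discriminant $\prod_{i<j}(r_i-r_j)^2$, so the first task is to track the four roots $r_0=1+t$ and $r_i=\lambda_i+t/\lambda_i$ for $i=1,2,3$ and locate where two of them coincide.

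A direct computation gives $r_0-r_i=(1-\lambda_i)(\lambda_i-t)/\lambda_i$ and $r_i-r_j=(\lambda_i-\lambda_j)(\lambda_i\lambda_j-t)/(\lambda_i\lambda_j)$. Hence, for generic Rosenhain roots, exactly one pair of roots collides, and only to first order, at each of the six distinct points $t\in\{\lambda_1,\lambda_2,\lambda_3,\lambda_1\lambda_2,\lambda_1\lambda_3,\lambda_2\lambda_3\}$; there $\Delta$ vanishes to order two while $g_2$ and $g_3$ do not, so by Kodaira's classification the fibre is of type $I_2$, accounting for the six $I_2$ fibres. The remaining two fibres sit over $t=0$ and $t=\infty$. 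Over $t=0$ every coefficient of $f_t$ is divisible by $t$, which forces $\operatorname{ord}_{t=0}(g_2)=2$, $\operatorname{ord}_{t=0}(g_3)=3$ and $\operatorname{ord}_{t=0}(\Delta)=6$; since the quartic $\prod_e(e^2-eX)$ obtained after dividing by $t$ still has four distinct roots $X\in\{1,\lambda_1,\lambda_2,\lambda_3\}$, the reduced cubic has distinct roots and the fibre is $I_0^*$. The fibre over $t=\infty$ is handled by the substitution $t=1/s$ together with a rescaling of $X$ and $Y$ to restore minimality, after which Tate's algorithm again yields $I_0^*$. As a check, the fibre Euler numbers sum to $6\cdot 2+2\cdot 6=24$, the Euler number of a K3 surface, so no further singular fibres are possible.

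To obtain the Mordell--Weil group I would combine the explicit two-torsion with the Shioda--Tate formula. Because the $\lambda_i$ are constants, all four roots $r_0,\dots,r_3$ lie in $\mathbb{C}(t)$, and the three ways of grouping them into pairs furnish three rational two-torsion sections, so $\operatorname{MW}(\bar{\pi})\supseteq(\mathbb{Z}/2)^2$. For a generic genus-two curve $\mathcal{C}$ the Kummer surface has Picard number $\rho=17$ (its transcendental lattice has rank five), while the trivial lattice of this fibration has rank $2+\sum_v(m_v-1)=2+6\cdot1+2\cdot4=16$; the Shioda--Tate formula then forces $\operatorname{rank}\operatorname{MW}(\bar{\pi})=1$. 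Since the torsion subgroup injects into the direct sum of the component groups of the fibres, which here all have exponent two, the torsion is two-elementary and therefore equal to its full two-torsion $(\mathbb{Z}/2)^2$; computing the height of a generator of the free part identifies the Mordell--Weil lattice with $\langle 1\rangle$, in agreement with fibration `1' of \cite{MR3263663}.

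I expect the fibre over $t=\infty$ to be the main obstacle: there all four roots of $f_t$ tend to infinity simultaneously, so the quartic-to-Weierstrass model is non-minimal and a naive reading of $g_2,g_3,\Delta$ can suggest a smooth or an additive fibre; one must rescale to a minimal model before Tate's algorithm correctly produces $I_0^*$. A secondary difficulty is upgrading the inclusion $(\mathbb{Z}/2)^2\subseteq\operatorname{MW}(\bar{\pi})$ and the rank count to the precise Mordell--Weil lattice $\langle 1\rangle$, where a direct comparison with the explicit Weierstrass data of \cite{MR3263663} is the most efficient route.
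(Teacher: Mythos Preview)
Your argument is correct and considerably more detailed than what the paper provides. The paper gives no proof at all beyond the sentence ``The following lemma is immediate and follows by comparison with the explicit results in \cite{MR3263663}'': it simply identifies Equation~(\ref{kummer2}) with Kumar's fibration `1' and quotes the fibre types and Mordell--Weil group from that reference.

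Your route is genuinely different in that you carry out the Kodaira analysis directly. The factorisation $r_0-r_i=(1-\lambda_i)(\lambda_i-t)/\lambda_i$ and $r_i-r_j=(\lambda_i-\lambda_j)(\lambda_i\lambda_j-t)/(\lambda_i\lambda_j)$ is exactly right and pins down the six $I_2$ fibres; the quadratic-twist argument at $t=0$ and $t=\infty$ correctly yields the two $I_0^*$ fibres, and the Euler-number check $6\cdot 2+2\cdot 6=24$ closes the list. For the Mordell--Weil group your combination of (i) rationality of all four roots forcing full $2$-torsion, (ii) Shioda--Tate with $\rho=17$ giving rank one, and (iii) the component-group injection forcing the torsion to be $2$-elementary, is a clean and self-contained replacement for the citation. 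The one place you still lean on \cite{MR3263663}, as you candidly note, is the height $\langle 1\rangle$ of the free generator; the paper does not compute this either but simply quotes it. What your approach buys is independence from Kumar's classification for everything except that final height; what the paper's approach buys is brevity.
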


The sextic curve is recovered directly from the Jacobian elliptic fibration by the following corollary:
\begin{cor}
A sextic associated with the genus-two curve $\mathcal{C}$ is recovered from the Jacobian elliptic fibration~(\ref{kummer2}) on $\mathrm{Kum}(\operatorname{Jac}\mathcal{C})$ 
by letting $X \to \infty$ while keeping 
$t/X=\xi$, $Y^2/X^5=\eta^2$ fixed, i.e.,
\begin{equation}
\lim_{\epsilon \to 0} \epsilon^{10} \, \mathcal{K}\left(Y=\frac{\eta}{\epsilon^5}, X= \frac{1}{\epsilon^2}, t= \frac{\xi}{\epsilon^2} \right) = \eta^2 - F(\xi) \;.
\end{equation}
\end{cor}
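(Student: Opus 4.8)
The plan is to carry out the substitution directly and isolate the leading order in $\epsilon$. Setting $Y=\eta/\epsilon^5$, $X=1/\epsilon^2$, $t=\xi/\epsilon^2$, the square term contributes $Y^2 = \eta^2/\epsilon^{10}$, so that $\epsilon^{10}\,Y^2 = \eta^2$ identically, independent of $\epsilon$. Thus the entire content of the limit resides in the five-fold product appearing in~(\ref{kummer2}), and the task reduces to showing that $\epsilon^{10}$ times that product tends to $F(\xi)$.

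The key observation I would use is that the five quadratic factors in~(\ref{kummer2}) all have the uniform shape $r^2 - r\,X + t$, where $r$ runs over the five roots $0,1,\lambda_1,\lambda_2,\lambda_3$ of $F$; the factor $t$ corresponds to $r=0$ and the factor $1-X+t$ to $r=1$. Substituting into a single generic factor gives
\begin{equation*}
 r^2 - r\,X + t = r^2 + \frac{\xi - r}{\epsilon^2} = \frac{1}{\epsilon^2}\Big( (\xi - r) + r^2\,\epsilon^2 \Big),
\end{equation*}
so each factor carries a pole of order $\epsilon^{-2}$ with leading coefficient $\xi - r$. Taking the product over all five roots yields a pole of order $\epsilon^{-10}$ whose leading coefficient is $\prod_r (\xi - r) = F(\xi)$.

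Multiplying by $\epsilon^{10}$ cancels the pole exactly and leaves $F(\xi)$ plus correction terms, each a polynomial in $\xi$ and the roots multiplied by a strictly positive power of $\epsilon^2$; these vanish as $\epsilon \to 0$. Combining this with $\epsilon^{10}Y^2=\eta^2$ then yields $\lim_{\epsilon\to 0}\epsilon^{10}\,\mathcal{K} = \eta^2 - F(\xi)$, as claimed. There is no genuine obstacle here: the only point requiring care is the bookkeeping confirming that the five factors reorganize into the single monic product $\prod_r(\xi-r)$ and that every subleading correction is suppressed by a positive power of $\epsilon$.
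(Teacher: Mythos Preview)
Your proposal is correct and is precisely the explicit computation the paper alludes to; the paper's own proof reads in full ``The proof follows from an explicit computation.'' Your observation that all five factors share the uniform shape $r^2 - rX + t$ for $r\in\{0,1,\lambda_1,\lambda_2,\lambda_3\}$ is a clean way to organize that computation and makes the emergence of $F(\xi)=\prod_r(\xi-r)$ transparent.
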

\begin{proof}
The proof follows from an explicit computation.
\end{proof}

\subsubsection{A second elliptic fibration on $\mathrm{Kum}(\operatorname{Jac}\mathcal{C})$}
There is another elliptic fibration $\mathcal{X} \to \mathbb{P}^1$ on $\mathrm{Kum}(\operatorname{Jac}\mathcal{C})$ which is more relevant for us,
labeled fibration `23' in  \cite{MR3263663}. Kumar proved the following \cite{MR2427457}:
\begin{prop}
A Jacobian elliptic fibration $\bar{\pi}: \bar{\mathcal{X}} \to \mathbb{P}^1$ with 6 singular fibers of type $I_2$, one fiber of type $I_5^*$,
one fiber of type $I_1$, and a Mordell-Weil group of sections $\operatorname{MW}(\bar{\pi})=\mathbb{Z}/2$ is given by the Weierstrass equation
\begin{equation}
\label{KumFib2}
\begin{split}
Y^2 = & \, X^3  - 2 \, \left( t^3 - \frac{I_4}{12} t+ \frac{I_2 I_4-3 I_6}{108} \right) \,X^2 \\
& + \left( \left(t^3 - \frac{I_4}{12} t + \frac{I_2 I_4 - 3 I_6}{108}\right)^2 + I_{10} \left(t- \frac{I_2}{24}\right)\right) \, X \;.
\end{split}
\end{equation}
\end{prop}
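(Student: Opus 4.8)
The plan is to verify directly that the Weierstrass equation~(\ref{KumFib2}) realizes the asserted configuration; every step is a substitution except the final determination of the Mordell--Weil group. Abbreviating $\phi(t) = t^3 - \frac{I_4}{12}\,t + \frac{I_2 I_4 - 3 I_6}{108}$, I would first note that the right-hand side factors as $X\bigl[(X-\phi)^2 + I_{10}(t - \frac{I_2}{24})\bigr]$, so that $(X,Y)=(0,0)$ is a section of order two and $\mathbb{Z}/2 \hookrightarrow \operatorname{MW}(\bar\pi)$ is immediate.

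Next I would compute the discriminant. For an equation of the shape $Y^2 = X^3 + a_2 X^2 + a_4 X$ one has $\Delta = 16\,a_4^2(a_2^2 - 4a_4)$ and $c_4 = 16\,a_2^2 - 48\,a_4$; substituting $a_2 = -2\phi$ and $a_4 = \phi^2 + I_{10}(t - \frac{I_2}{24})$ collapses $a_2^2 - 4a_4$ to $-4 I_{10}(t - \frac{I_2}{24})$, so that
\begin{gather*}
\Delta(t) = -\,64\, I_{10}\,\Bigl(t - \tfrac{I_2}{24}\Bigr)\,\Bigl(\phi(t)^2 + I_{10}\bigl(t - \tfrac{I_2}{24}\bigr)\Bigr)^{2}, \\
c_4(t) = 16\,\phi(t)^2 - 48\, I_{10}\bigl(t - \tfrac{I_2}{24}\bigr).
\end{gather*}
Under the genericity hypothesis $\chi_{10}(\tau)\,Q(\tau)\neq 0$ the degree-six polynomial $a_4$ has six distinct roots, none equal to $t=\frac{I_2}{24}$, and at any such root the relation $I_{10}(t-\frac{I_2}{24}) = -\phi^2$ gives $c_4 = 64\,\phi^2 \neq 0$; hence $(\operatorname{ord}\Delta, \operatorname{ord} c_4) = (2,0)$ and Tate's algorithm yields six fibers of type $I_2$. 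The remaining simple factor $t - \frac{I_2}{24}$ produces $\operatorname{ord}\Delta = 1$, i.e.\ one fiber of type $I_1$.

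For the fiber over $t=\infty$ I would argue by degrees: $\deg c_4 = 6$, $\deg c_6 = 9$, $\deg\Delta = 13$, where $c_6 = -64\,\phi^3 - 576\,\phi\, I_{10}(t-\frac{I_2}{24})$. Reading $c_4,c_6,\Delta$ as sections of $\mathcal{O}(8),\mathcal{O}(12),\mathcal{O}(24)$ on the K3 Weierstrass model gives vanishing orders $(\operatorname{ord} c_4, \operatorname{ord} c_6, \operatorname{ord}\Delta) = (2,3,11)$ at infinity; since $\operatorname{ord} c_4 = 2 < 4$ the model is minimal there, and Kodaira's table ($\operatorname{ord}\Delta = 6+n$) identifies type $I_5^*$ with $n=5$. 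The Euler numbers then total $6\cdot 2 + 1 + 11 = 24$, confirming that the surface is a K3 and that no fiber has been overlooked.

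Finally, for the Mordell--Weil group I would use Shioda--Tate. The trivial lattice is $H \oplus A_1^{\oplus 6} \oplus D_9$ of rank $2+6+9 = 17$, which matches the Picard number of $\mathrm{Kum}(\operatorname{Jac}\mathcal{C})$ for a generic genus-two curve, so $\operatorname{MW}(\bar\pi)$ has rank zero. To isolate the torsion I would compare discriminants: $|\operatorname{disc}(H \oplus A_1^{\oplus 6} \oplus D_9)| = 2^6\cdot 4 = 2^8$, while $|\operatorname{disc}\operatorname{NS}(\mathcal{X})| = |\operatorname{disc} T(\mathrm{Kum})| = |\operatorname{disc}(H(2)\oplus H(2)\oplus\langle -4\rangle)| = 2^6$ because $T(\mathrm{Kum}) = T(\operatorname{Jac}\mathcal{C})(2)$; since $|\operatorname{MW}_{\mathrm{tor}}|^2 = 2^8/2^6 = 4$, the torsion has order two and $\operatorname{MW}(\bar\pi) = \mathbb{Z}/2$. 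I expect the main obstacle to be the genericity bookkeeping at the finite fibers—confirming that the six roots of $a_4$ are simple, distinct, and disjoint from $t=\frac{I_2}{24}$—together with the lattice-discriminant step that upgrades ``rank zero'' to the precise group $\mathbb{Z}/2$; the fiber identifications themselves are routine once the discriminant is in hand.
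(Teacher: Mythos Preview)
Your proposal is correct and considerably more detailed than what the paper provides. The paper does not prove this proposition at all; it simply attributes the result to Kumar, in whose classification of Jacobian elliptic fibrations on the generic Kummer surface this appears as fibration number~23. Your direct verification---factoring the right-hand side to exhibit the $2$-torsion section, computing $\Delta$ and $c_4$ explicitly, reading off the finite fiber types and the $I_5^*$ fiber at infinity from Kodaira's table, checking the Euler number, and pinning down the Mordell--Weil group via Shioda--Tate together with the known discriminant of $\operatorname{NS}(\mathrm{Kum}(\operatorname{Jac}\mathcal{C}))$---is a self-contained argument that the paper does not supply. The one place where you implicitly lean on Kumar is the identification of the surface defined by Equation~(\ref{KumFib2}) with the Kummer surface of a generic genus-two curve, which you need in order to assert Picard number~$17$ and $\lvert\operatorname{disc}\operatorname{NS}\rvert=2^6$; without that identification the Mordell--Weil step would require an independent argument that the generic member of the family has Picard number exactly~$17$.
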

An immediate corollary is the following:
\begin{cor}
The positions of the $I_2$ fibers in the elliptic fibration~(\ref{KumFib2}) on the Kummer surface $\mathrm{Kum}(\operatorname{Jac}\mathcal{C})$ are given by the roots of the polynomial
\begin{equation}
\label{SatakeSextic_a}
 \left(t^3 - \frac{I_4}{12} t + \frac{I_2 I_4 - 3 I_6}{108}\right)^2 + I_{10} \left(t- \frac{I_2}{24}\right) =0 \;,
\end{equation}
or equivalently by
\begin{equation}
\label{SatakeSextic_b}
 \left(t^3 - \frac{\psi_4}3 t + \frac{2 \, \psi_6}{27}\right)^2 -2^{14} \,  \left(\chi_{10} \, t + \chi_{12}\right) =0 \;.
\end{equation}
Equivalently, the loci of $I_2$ fibers form the ramification locus of the Satake sextic~(\ref{SatakeSextic})
if we set $t=-x/3$.
\end{cor}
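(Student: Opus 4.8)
The plan is to read the finite singular fibres of the Weierstrass fibration~(\ref{KumFib2}) off its discriminant, isolate the six $I_2$ fibres as a degree-six factor, and then reduce that factor to the two stated equivalent forms and to the ramification locus of the Satake sextic by elementary substitutions.

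First I would set $P(t)=t^3-\tfrac{I_4}{12}\,t+\tfrac{I_2I_4-3I_6}{108}$ and $Q(t)=P(t)^2+I_{10}\bigl(t-\tfrac{I_2}{24}\bigr)$, so that~(\ref{KumFib2}) becomes $Y^2=X\bigl(X^2-2P\,X+Q\bigr)$. The two-torsion section $(X,Y)=(0,0)$ is manifest in this factored form and matches $\operatorname{MW}(\bar\pi)=\mathbb{Z}/2$. A fibre over a finite value of $t$ is singular exactly where the cubic $X(X^2-2PX+Q)$ has a repeated root in $X$; computing the discriminant of this cubic from its roots $0,\ P\pm\sqrt{P^2-Q}$ gives $4\,Q^2\,(P^2-Q)$.

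The main step is to match the two factors to the fibre data supplied by the preceding Proposition. By the definition of $Q$ one has $P^2-Q=-I_{10}\bigl(t-\tfrac{I_2}{24}\bigr)$, a linear polynomial whose single zero $t=\tfrac{I_2}{24}$ is the lone $I_1$ fibre; the squared factor $Q^2$ supplies the six zeros of $Q$, at each of which the repeated root of the cubic sits at $X=0$, i.e.\ on the two-torsion section, so these are the six $I_2$ fibres. (The $I_5^*$ fibre lies at $t=\infty$ and does not affect the finite locus; the count $6\cdot 2+11+1=24$ confirms consistency with the K3 Euler number.) The hard part, such as it is, is precisely this last identification: one must invoke the two-torsion section to certify that the double factor $Q^2$ carries $I_2$ rather than $I_1$ reduction. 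This shows the $I_2$ fibres lie exactly at the zeros of $Q=P^2+I_{10}\bigl(t-\tfrac{I_2}{24}\bigr)$, which is Equation~(\ref{SatakeSextic_a}).

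The remaining assertions are routine substitutions. For the equivalence of~(\ref{SatakeSextic_a}) and~(\ref{SatakeSextic_b}) I would feed in the relations~(\ref{invariants}): one gets $\tfrac{I_4}{12}=\tfrac{\psi_4}{3}$, $\tfrac{I_2I_4-3I_6}{108}=\tfrac{2\psi_6}{27}$, and, using $I_{10}=-2^{14}\chi_{10}$ with $\tfrac{I_2}{24}=-\tfrac{\chi_{12}}{\chi_{10}}$, that $I_{10}\bigl(t-\tfrac{I_2}{24}\bigr)=-2^{14}\bigl(\chi_{10}\,t+\chi_{12}\bigr)$. Finally, substituting $t=-x/3$ into~(\ref{SatakeSextic_b}) turns the inner cubic into $-\tfrac{1}{27}\bigl(x^3-3\psi_4\,x-2\psi_6\bigr)$ and the linear term into $\tfrac{2^{14}}{3}\bigl(\chi_{10}\,x-3\chi_{12}\bigr)$; clearing the common factor $729=3^6$ reproduces exactly $\bigl(x^3-3\psi_4\,x-2\psi_6\bigr)^2+2^{14}3^5\bigl(\chi_{10}\,x-3\chi_{12}\bigr)$, i.e.\ the polynomial $f(x)$ of~(\ref{SatakeSextic_3}), which by Proposition~\ref{Satake6} is the Satake sextic~(\ref{SatakeSextic}). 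Hence the $I_2$ locus is carried onto the branch locus $f(x)=0$, the ramification locus of $y^2=f(x)$, as claimed.
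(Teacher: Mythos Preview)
Your proof is correct and complete. The paper itself offers no explicit proof, labeling the statement ``an immediate corollary'' of the preceding Proposition; you have simply written out the computation that the authors leave to the reader, namely factoring the discriminant of the Weierstrass cubic and carrying out the substitutions~(\ref{invariants}) and $t=-x/3$.

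One small remark on exposition: your identification of the $I_2$ fibres via the two-torsion section is valid, but it is not the ``hard part'' you make it out to be. Since the discriminant of the cubic in $X$ is $4\,Q^2(P^2-Q)$, at a simple zero of $Q$ the discriminant of the elliptic fibration vanishes to order exactly $2$ while the coefficients do not all vanish; Kodaira's table then gives $I_2$ directly, with no appeal to the torsion section needed. The two-torsion argument is a pleasant consistency check rather than the crux.
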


\subsection{The Shioda-Inose surface}
A K3 surface $\mathcal{Y}$ has a Shioda-Inose structure if it admits an involution fixing the holomorphic two-form, such that the 
quotient is the Kummer surface $\operatorname{Kum}(\mathbf{A})$ of a principally polarized abelian surface $\mathbf{A}$ and the
rational quotient map $p: \mathcal{Y} \dashrightarrow \operatorname{Kum}(\mathbf{A})$ of degree two
induces a Hodge isometry\footnote{A Hodge isometry between two transcendental lattices is an isometry preserving the Hodge structure. } 
between the transcendental lattices $T(\mathcal{Y})(2)$\footnote{The notation $T(\mathcal{Y})(2)$ indicates 
that the bilinear pairing on the transcendental lattice $T(\mathcal{Y})$ is multiplied by $2$.}
 and  $T(\operatorname{Kum} \mathbf{A})$. 
Morrison proved that a K3 surface $\mathcal{Y}$ admits a Shioda-Inose structure if and only if there exists a Hodge isometry
between the following transcendental lattices $T(\mathcal{Y}) \cong T(\mathbf{A})$. 

The Shioda-Inose $\mathcal{Y}$ of the Kummer surface $\operatorname{Kum}(\operatorname{Jac} \mathcal{C})$ for a generic genus-two  curve $\mathcal{C}$ is a K3 surface of Picard-rank 17 
and has a  transcendental lattice  isomorphic to $H \oplus H \oplus \langle -2 \rangle$
 by Morrison's criterion. It was shown in \cite{MR2306633} that for fixed $\mathcal{C}$ this K3 surface $\mathcal{Y}$ is in fact unique.
In the following, we always let  $\mathcal{Y}=\mathrm{SI}(\operatorname{Jac}\mathcal{C})$ be this K3 surface with Shioda-Inose structure.

Clingher and Doran proved  in~\cite{MR2824841} that as the genus-two curve $\mathcal{C}$ varies the K3 surface $\mathcal{Y}$ admits a birational model isomorphic to a quartic surface 
with canonical $H \oplus E_8(-1)\oplus E_7(-1)$\footnote{Here, $E_8(-1)$ and $E_7(-1)$ 
are the negative definite lattice associated with the exceptional root systems of $E_8$ and $E_7$, respectively.}  lattice polarization\footnote{A lattice polarization 
of a K3 surface $\mathcal{Y}$ is a primitive embedding of a lattice $L' \hookrightarrow L = H_2(\mathcal{Y}, \mathbb{Z})$ such that the image of 
$L'$ lies in the N\'eron-Severi group $\operatorname{NS}(\mathcal{Y}) = L \cap H^{1,1}(\mathcal{Y})$ and contains a pseudo-ample class.} 
that fits into  the following four-parameter family in $\mathbb{P}^3$ \cite{MR2824841}*{Eq.~\!(3)} given in terms of the
variables $[\mathbf{W}:\mathbf{X}:\mathbf{Y}:\mathbf{Z}]\in \mathbb{P}^3$ by the equation
\begin{equation}
\label{Inose}
\mathbf{Y}^2\mathbf{ZW}-4\, \mathbf{X}^3\mathbf{Z}+3\, \alpha \, \mathbf{XZW}^2 + \beta \, \mathbf{ZW}^3 + \gamma \, \mathbf{XZ}^2 \mathbf{W} -\frac{1}2(\delta \, 
\mathbf{Z}^2\mathbf{W}^2+\mathbf{W}^4)=0.
\end{equation}
They also found the parameters $(\alpha,\beta,\gamma,\delta)$ in terms of the standard even Siegel modular forms $\psi_4, \psi_6, \chi_{10}, \chi_{12}$ (cf.~\cite{MR0141643}) given by
\begin{equation}
 (\alpha,\beta,\gamma,\delta) = \left(\psi_4, \psi_6, 2^{12}3^5 \, \chi_{10}, 2^{12}3^6 \, \chi_{12}\right) \;,
\end{equation}
or, equivalently, in terms of the Igusa-Clebsch invariants using Equations~(\ref{invariants}) by
\begin{equation}
 (\alpha,\beta,\gamma,\delta) = \left(\frac{1}4 I_4, \frac{1}{8} I_2 \, I_4 - \frac3{8} I_6, - \frac{243}4 I_{10}, \frac{243}{32} I_2\, I_{10}\right) \;,
\end{equation}
where $I_n$ for $n=2,4,6,10$ are the Igusa invariants of the sexic curve~(\ref{Rosen2}) defining the genus-two curve $\mathcal{C}$ if $I_{10} \not = 0$.
The Shioda-Inose surface $\mathcal{Y}=\mathrm{SI}(\operatorname{Jac}\mathcal{C})$ of a generic genus-two curve $\mathcal{C}$ admits two Jacobian elliptic fibrations
realizing the two inequivalent ways of embedding of $H$ into the lattice $H \oplus E_8(-1)\oplus E_7(-1)$.
These two elliptic fibrations were described in \cites{MR2427457, MR2935386}.
A similar picture was developed in earlier work for the case of a $H \oplus E_8(-1) \oplus E_8(-1)$ lattice polarization in \cites{MR2369941,MR2279280} that generalized a special two-parameter 
family of K3 surfaces introduced by Inose in \cite{MR578868}. 

From the point of view of K3 geometry, if the periods are preserved by
a reflection of $\delta$ with $\delta^2=-2$, then $\delta$ must belong
to the N\'eron-Severi lattice of the K3 surface.  That is, the
lattice $H \oplus E_8(-1)\oplus E_7(-1)$ must be enlarged
by adjoining $\delta$.  It is not hard to show (using methods of
\cite{MR525944}, for example), that there are only two ways this enlargement
can happen (if we have adjoined a single element only):  either the lattice is extended to 
$H \oplus E_8(-1)\oplus E_8(-1)$ or it is extended to $H\oplus E_8(-1)\oplus E_7(-1) \oplus
\langle -2 \rangle$.  

\subsubsection{The alternate fibration}
The first Jacobian elliptic fibration on (\ref{Inose}), called the \emph{alternate fibration}, has two disjoint sections and a singular fiber of Kodaira-type $I_{10}^*$.
For convenience, let us introduce the parameters $(a,b,c,d,e)$ given by
\begin{equation}
\label{moduli_abcde}
a =- \dfrac{I_4}{12}, \; b=\dfrac{I_2 \, I_4 - 3 \, I_6}{108},\; c=-1, \; d=\dfrac{I_2}{24}, \; e=\dfrac{I_{10}}4.
\end{equation}
The alternate fibration is obtained by setting
\begin{equation}
\label{transfo_alt}
 \mathbf{X} = \dfrac{t \, x^3}{2^{29} \, 3^5} \;, \quad \mathbf{Y}=\dfrac{\sqrt6\, i \, x^2  y}{2^{29} \, 3^5} \;,\quad \mathbf{W}=-\dfrac{x^3}{2^{28} \, 3^6} \;, \quad \mathbf{Z}= \dfrac{x^2}{2^{28} \, 3^9} \;,
\end{equation}
in Equation~(\ref{Inose}), and given in Weierstrass form by
\begin{equation}
\label{WEq.bak.alt}
 y^2 = x^3 + (t^3 + a t + b) \, x^2 + e \, (ct+d) \, x \;.
\end{equation}
The discriminant of Equation~(\ref{WEq.bak.alt}) is given by
\begin{equation}
 \Delta = 16\, e^2 \,  \left( ct+d \right)^2 \, \Big( (t^3 + a t + b)^2 - 4 \, e \, (ct+d) \Big) \;.
\end{equation}
The fibration (\ref{WEq.bak.alt}) has special fibers of Kodaira-types $I_{10}^*$ and $I_2$, and  six fibers of Kodaira-type $I_1$, and a second two-torsion section $\bar{S}_1: (x,y)=(0,0)$.
This proves the following:
\begin{prop}
\label{Prop:alternate}
For a generic genus-two curve $\mathcal{C}$ there is a Jacobian elliptic fibration $\bar{\pi}_{\text{alt}}: \bar{\mathcal{Y}\,} \to \mathbb{P}^1$ on $\mathcal{Y}=\mathrm{SI}(\operatorname{Jac}\mathcal{C})$
given by Equation~(\ref{WEq.bak.alt}) with 6 singular fibers of type $I_1$, one fiber of type $I_{10}^*$, and one fiber of type $I_2$, and a Mordell-Weil group of sections 
$\operatorname{MW}(\bar{\pi})=\mathbb{Z}/2$  with elliptic parameter $t=t_{\text{alt}}$.
\end{prop}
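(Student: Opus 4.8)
The plan is to verify all of the asserted data directly from the Weierstrass equation~(\ref{WEq.bak.alt}), which I regard as the genus-one fibration on $\mathcal{Y}$ induced by the substitution~(\ref{transfo_alt}) into the Clingher--Doran model~(\ref{Inose}). First I would check that inserting~(\ref{transfo_alt}) into~(\ref{Inose}) and clearing the common monomial factor reproduces~(\ref{WEq.bak.alt}) identically. Since~(\ref{Inose}) is a birational model of $\mathcal{Y}$ by~\cite{MR2824841} and the substitution is invertible as a rational map with $t$ a rational function on $\mathcal{Y}$ (indeed $t=-\tfrac23\,\mathbf{X}/\mathbf{W}$), the fibration $\bar\pi_{\text{alt}}\colon \bar{\mathcal{Y}\,}\to\mathbb{P}^1$ lives on $\mathcal{Y}$, with the point at infinity in each fiber supplying the distinguished section $\bar{S}_0$.

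Next I would read the finite singular fibers off the discriminant $\Delta = 16\,e^2\,(ct+d)^2\,\big((t^3+at+b)^2 - 4\,e\,(ct+d)\big)$. At the simple zero $ct+d=0$ the equation degenerates to $y^2 = x^2\big(x + (t^3+at+b)\big)$, a curve with a node at $(x,y)=(0,0)$ whose tangent cone $y^2=(t^3+at+b)\,x^2$ splits into two distinct lines because $t^3+at+b\neq0$ there for generic $\mathcal{C}$; together with $\operatorname{ord}_{t}\Delta=2$ this is multiplicative reduction of type $I_2$. At each of the six zeros of the sextic factor $(t^3+at+b)^2 - 4\,e\,(ct+d)$---which under~(\ref{moduli_abcde}) is exactly the Satake sextic~(\ref{SatakeSextic_a}), hence squarefree precisely when $\Delta_f=2^{52}3^{21}Q(\tau)\neq0$---the discriminant has a simple zero accompanied by a single node, giving six fibers of type $I_1$.

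The delicate point is the fiber over $t=\infty$. Setting $s=1/t$, completing the cube by $x\mapsto x-\tfrac13(t^3+at+b)$ to obtain a short Weierstrass form $y^2=x^3+f\,x+g$, and reading $f,g,\Delta$ as sections of $\mathcal{O}(8),\mathcal{O}(12),\mathcal{O}(24)$, I would compute the orders of vanishing at $s=0$ from the degree deficits $\deg_t f=6$, $\deg_t g=9$, $\deg_t\Delta=8$, which give $\big(\operatorname{ord}_s f,\operatorname{ord}_s g,\operatorname{ord}_s\Delta\big)=(2,3,16)$. Since $\operatorname{ord}_s f=2<4$ the model is already minimal at infinity, and Kodaira's table~\cite{MR0184257} identifies this as type $I^*_n$ with $n+6=16$, i.e.\ $I^*_{10}$. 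As a consistency check the Euler numbers of the fibers sum to $16+2+6\cdot1=24$, the Euler characteristic of the K3 surface $\mathcal{Y}$, so there are no further singular fibers.

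Finally, the factorization $y^2 = x\big(x^2 + (t^3+at+b)\,x + e\,(ct+d)\big)$ exhibits the two-torsion section $\bar{S}_1\colon(x,y)=(0,0)$, so $\mathbb{Z}/2\subseteq\operatorname{MW}(\bar\pi_{\text{alt}})$. To see this is the whole group I would invoke the Shioda--Tate relation $\rho(\mathcal{Y})=2+r+\sum_v(m_v-1)$, where $r$ is the Mordell--Weil rank: with $\rho(\mathcal{Y})=17$ and component counts $15$ for $I^*_{10}$ and $2$ for $I_2$, it reads $17=2+r+(14+1)$, forcing $r=0$. The torsion then injects into the product of component groups $(\mathbb{Z}/2)^2\times\mathbb{Z}/2$, so $\operatorname{MW}(\bar\pi_{\text{alt}})$ is elementary abelian of exponent two; the two remaining two-torsion points require a rational root of $x^2+(t^3+at+b)\,x+e\,(ct+d)$, whose discriminant is the squarefree Satake sextic, so that quadratic is irreducible over $\mathbb{C}(t)$ and the two-torsion---hence the entire group---is exactly $\mathbb{Z}/2$. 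The main obstacle throughout is pinning down the fiber over $t=\infty$ as precisely $I^*_{10}$, which relies on the minimal-model reduction and Kodaira--Tate bookkeeping rather than on the discriminant degree alone.
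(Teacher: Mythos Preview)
Your proposal is correct and follows the paper's own line of argument---computing the discriminant of the Weierstrass model~(\ref{WEq.bak.alt}), reading off the Kodaira fiber types, and exhibiting the two-torsion section $(x,y)=(0,0)$---while supplying the details the paper leaves implicit (the order-of-vanishing analysis at $t=\infty$ identifying the $I_{10}^*$ fiber, the Shioda--Tate count forcing $r=0$, and the squarefreeness of the Satake sextic pinning the torsion at exactly $\mathbb{Z}/2$). The paper states these conclusions in the paragraph preceding the proposition without further justification, so your write-up is a strict elaboration of the same approach rather than a different one.
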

The following corollary is crucial:
\begin{cor}
\label{cor:position_nodes}
The positions of the $I_1$ fibers in the Jacobian elliptic fibration~(\ref{KumFib2})
are given by the roots of the polynomial
\begin{equation}
\label{Satake_b}
    f(t) = (t^3 + a t + b)^2 - 4 \, e \, (ct+d) = 0 \;.
\end{equation}
Equivalently, the loci of $I_1$ fibers form the ramification locus of the Satake sextic~(\ref{SatakeSextic})
if we set $t=-x/3$.
\end{cor}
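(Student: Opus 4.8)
The plan is to read the locations of the fibers of type $I_1$ directly off the discriminant of the alternate fibration~(\ref{WEq.bak.alt}), and then to show that, after the linear change of coordinate $t=-x/3$, the degree-six factor carrying those roots is a constant multiple of the Satake sextic. Since the Satake sextic is monic of even degree with nonzero leading coefficient, its hyperelliptic double cover $y^2=f(x)$ is unramified over $x=\infty$, so its ramification locus is exactly the six roots $x_1,\dots,x_6$; matching these to the $I_1$ positions is then the whole content of the corollary.

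First I would invoke Proposition~\ref{Prop:alternate} and the discriminant displayed just before it, namely $\Delta = 16\, e^2 (ct+d)^2\big((t^3+at+b)^2 - 4e(ct+d)\big)$. The factor $(ct+d)^2$ is a double zero and accounts for the single fiber of type $I_2$, while the fiber of type $I_{10}^*$ sits at $t=\infty$ (consistent with the Euler-number count $16+2+6=24$); consequently the six fibers of type $I_1$ can only lie at the six simple zeros of the remaining degree-six factor $f(t) = (t^3 + at + b)^2 - 4e(ct+d)$ of~(\ref{Satake_b}). This identifies the $I_1$ positions as the roots of $f(t)$ without further computation.

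The second step is the substitution $t=-x/3$ together with the moduli dictionary~(\ref{moduli_abcde}). A short computation gives $t^3 + at + b = -\tfrac{1}{27}\big(x^3 - \tfrac34 I_4\, x - \tfrac14(I_2 I_4 - 3 I_6)\big)$ and $-4e(ct+d) = -I_{10}\big(\tfrac{x}{3} + \tfrac{I_2}{24}\big)$. I would then pass to the even Siegel generators through~(\ref{invariants}): the relation $I_4 = 2^2\psi_4$ turns $\tfrac34 I_4$ into $3\psi_4$, the cancellation $I_2 I_4 - 3 I_6 = 2^3\psi_6$ turns $\tfrac14(I_2 I_4 - 3 I_6)$ into $2\psi_6$, and $I_{10} = -2^{14}\chi_{10}$ together with $I_2 = -2^3\cdot 3\,\chi_{12}/\chi_{10}$ turns the linear part into $\tfrac{2^{14}}{3}\chi_{10}\, x - 2^{14}\chi_{12}$. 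Collecting terms yields
\[
 f\!\left(-\tfrac{x}{3}\right) = \frac{1}{3^6}\Big[(x^3 - 3\psi_4\, x - 2\psi_6)^2 + 2^{14} 3^5(\chi_{10}\, x - 3\chi_{12})\Big],
\]
which is exactly $3^{-6}$ times the Satake sextic in the form~(\ref{SatakeSextic_3}), equivalently~(\ref{SatakeSextic}).

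Since $f(-x/3)$ and the Satake sextic $f(x)$ differ only by the nonzero constant $3^{-6}$, their zero sets correspond under $x=-3t$: the roots of $f(t)$ are precisely $t=-x_i/3$, where $x_1,\dots,x_6$ are the Satake coordinate functions and the roots of the Satake sextic. By the remark above, these $x_i$ are exactly the ramification locus of $y^2=f(x)$, which proves the claim. I expect the only delicate point to be the bookkeeping of the powers of $2$ and $3$ in the linear term, in particular verifying that $-I_{10}\, I_2/24$ collapses to $-2^{14}\chi_{12}$ and that the global factor is precisely $3^{-6}$ so that the prefactor $2^{14}3^5$ of~(\ref{SatakeSextic_3}) is reproduced; the cubic factor matches transparently via the identity $I_2 I_4 - 3 I_6 = 2^3\psi_6$.
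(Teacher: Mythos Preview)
Your proof is correct and follows essentially the same route the paper intends: read the six $I_1$ loci off the discriminant factor $(t^3+at+b)^2-4e(ct+d)$ of the alternate fibration, then verify via the dictionary~(\ref{moduli_abcde}) and~(\ref{invariants}) that the substitution $t=-x/3$ turns this factor into $3^{-6}$ times the Satake sextic~(\ref{SatakeSextic_3}). The paper treats the corollary as immediate and does not spell out the arithmetic; your careful tracking of the cancellation $I_2I_4-3I_6=2^3\psi_6$ and of the constants in the linear term is exactly the verification that is being left to the reader.
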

Given the discussion at  the end of previous section, the following corollaries are immediate:
\begin{cor}
\label{cor:Qvanish}
For the Jacobian elliptic fibration~(\ref{WEq.bak.alt}) two $I_1$ fibers coalesce and form a fiber of type $I_2$
if and only if the discriminant of the Satake sextic~(\ref{Satake_b}) vanishes, i.e., $\Delta_{f} = Q  = 0$,
or, equivalently,
\begin{equation}
\begin{split}
0  = {e}^3 \Big( 16\,{a}^{7}{c}^2d-16\,{a}^6b{c}^3+16\,{a}^{
5}{c}^4e+16\,{a}^6{d}^3\\+216\,{a}^4{b}^2{c}^2d+888\,{a}^4
{c}^2{d}^2e-216\,{a}^3{b}^3{c}^3-3420\,{a}^3b{c}^3de+
2700\,{a}^2{b}^2{c}^4e\\+4125\,{a}^2{c}^4d{e}^2-5625\,ab{c}^
5{e}^2+3125\,{c}^6{e}^3+216\,{a}^3{b}^2{d}^3+864\,{a}^{3
}{d}^4e\\-2592\,{a}^2bc{d}^3e+729\,a{b}^4{c}^2d-5670\,a{b}^2
{c}^2{d}^2e+16200\,a{c}^2{d}^3{e}^2-729\,{b}^5{c}^3\\+6075
\,{b}^3{c}^3de-13500\,b{c}^3{d}^2{e}^2+729\,{b}^4{d}^3-
5832\,{b}^2{d}^4e+11664\,{d}^5{e}^2 \Big).
\end{split}
\end{equation}
Equivalently, $\Delta_{f} =0$ iff the lattice polarization $H \oplus E_8(-1)\oplus E_7(-1)$ of the family~(\ref{WEq.bak.alt}) extends to 
$H\oplus E_8(-1)\oplus E_7(-1) \oplus \langle -2 \rangle$.  
\end{cor}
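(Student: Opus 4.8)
The plan is to read the three stated equivalences off the Weierstrass data, since most of the work is already done in Propositions~\ref{Prop:alternate} and~\ref{Satake6}. First I would use the factorization of the discriminant of~(\ref{WEq.bak.alt}),
\[ \Delta_{\mathrm{surf}} = 16\, e^2 \, (ct+d)^2 \, \Big( (t^3+at+b)^2 - 4\,e\,(ct+d) \Big) = 16\, e^2 \, (ct+d)^2 \, f(t), \]
where $f$ is the Satake sextic~(\ref{Satake_b}). By Proposition~\ref{Prop:alternate} the six $I_1$ fibers sit at the six simple zeros of $f$, while the factor $(ct+d)^2$ carries the fixed $I_2$ fiber. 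Two of the $I_1$ fibers collide exactly when $f$ develops a multiple root; at a generic such collision $g_2$ and $g_3$ do not vanish simultaneously, so $\Delta_{\mathrm{surf}}$ acquires a double zero with $g_2,g_3$ of vanishing order zero, and reading off the Kodaira type from the vanishing orders of $g_2,g_3,\Delta_{\mathrm{surf}}$ yields a fiber of type $I_2$. Hence the coalescence is equivalent to $\Delta_f = \operatorname{disc}_t f = 0$, and Proposition~\ref{Satake6}, through $\Delta_f = I_{10}(f) = 2^{52}\,3^{21}\,Q(\tau)$, converts this into $\Delta_f = Q = 0$.

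The explicit polynomial in $(a,b,c,d,e)$ I would obtain by a single resultant computation. Expanding $f(t) = t^6 + 2a\,t^4 + 2b\,t^3 + a^2 t^2 + (2ab-4ce)\,t + (b^2-4de)$ and forming $\operatorname{disc}_t f$ as a resultant of $f$ and $f'$ produces a weighted-homogeneous polynomial of weight $60$, matching the weight of $Q$. The factor $e^3$ is forced: at $e=0$ the sextic degenerates to the perfect square $(t^3+at+b)^2$, whose three double roots each split as $r_i \pm O(\sqrt e)$, so each of the three close pairs contributes a factor of order $e$ to $\prod_{i<j}(t_i-t_j)^2$ while the remaining factors stay bounded away from zero; thus $\operatorname{disc}_t f$ vanishes to order exactly three in $e$. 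After dividing by $e^3$ one is left with the displayed weight-$30$ polynomial, whose coincidence with the stated expression is then verified verbatim by computer algebra.

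For the lattice reformulation I would invoke the Shioda--Tate formula for~(\ref{WEq.bak.alt}). Generically the fiber $I_{10}^*$ contributes a root lattice $D_{14}$ of rank $14$, the fixed $I_2$ contributes $A_1 = \langle -2 \rangle$, the Mordell--Weil rank is zero, and together with the rank-$2$ span of fiber and section this gives Picard number $\rho = 2+14+1 = 17$, consistent with the $H \oplus E_8(-1)\oplus E_7(-1)$ polarization. When $\Delta_f = 0$ the collision produces a second $I_2$ fiber, whose cycle of two components adjoins a further orthogonal $A_1 = \langle -2 \rangle$ to the fiber root lattice and raises $\rho$ to $18$. By the dichotomy recalled at the close of the previous subsection there are only two rank-$18$ enlargements of $H \oplus E_8(-1)\oplus E_7(-1)$ by a class of square $-2$; since the new class enters as an isolated orthogonal $A_1$ rather than enhancing $E_7(-1)$ to $E_8(-1)$, we land in the second case, $H \oplus E_8(-1)\oplus E_7(-1)\oplus\langle -2\rangle$.

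The heavier part of the argument is purely computational---checking that $\operatorname{disc}_t f / e^3$ reproduces the displayed polynomial exactly---but this is routine. The genuinely delicate point, and the one I expect to be the main obstacle, is justifying the word \emph{generic}: I must confirm that the colliding $I_1$ fibers yield precisely an $I_2$ fiber and a single orthogonal $\langle -2\rangle$ class, which requires excluding the sub-loci of $\{\Delta_f=0\}$ where the double root of $f$ meets $t = -d/c$ or where $f$ acquires a root of higher order, since there the Kodaira type and the rank jump may differ. Restricting to the open stratum where $g_2,g_3$ do not both vanish at the collision secures both the $I_2$ identification and the primitivity of the adjoined $\langle -2 \rangle$, which is exactly what pins the enlargement down to $H \oplus E_8(-1)\oplus E_7(-1)\oplus\langle -2\rangle$.
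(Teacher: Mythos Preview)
Your proposal is correct and follows the same route the paper intends: the paper presents this corollary as an immediate consequence of Corollary~\ref{cor:position_nodes} (the $I_1$ loci are the roots of $f$), Proposition~\ref{Satake6} (identifying $\Delta_f$ with $Q$), and the Nikulin-type dichotomy on rank-18 enlargements of $H\oplus E_8(-1)\oplus E_7(-1)$, without giving any further argument. Your writeup supplies the computational and Shioda--Tate details that the paper leaves implicit, and your caution about the non-generic sub-loci of $\{\Delta_f=0\}$ is well taken but goes beyond what the paper itself addresses.
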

\begin{rem}
We remark that an $I_1$ fiber will coalesce with the $I_2$ fiber to form a fiber of type $III$ if and only if
$$
 e^3 \, (a \, c^2 \, d-b \, c^3+d^3) =  
 2 \, \psi_6 \, \chi_{10}^3 +9 \, \psi_4 \, \chi_{10}^2 \, \chi_{12} -27 \, \chi_{12}^3 
 =0.
$$ 
However, it is easy to show that this does not change the lattice polarization of the family~(\ref{WEq.bak.alt}).
\end{rem}
If we use a normalization consistent with F-theory \cite{MR3366121} and set
\begin{equation}
 \mathbf{X} = \dfrac{t \, x^3}{2^9 \, 3^5} \;, \quad \mathbf{Y}=\dfrac{x^2 \, y}{2^{15/2} \, 3^{9/2}} \;,\quad \mathbf{W}=\dfrac{x^3}{2^{10} \, 3^6} \;, \quad \mathbf{Z}= \dfrac{x^2}{2^{16} \, 3^9} \;,
\end{equation}
and obtain from Equation~(\ref{Inose}) the Jacobian elliptic fibration
\begin{equation}
\label{WEq.bak.alt2}
 y^2 = x^3  + \left( t^3 - \frac{\psi_4}{48} \, t - \frac{\psi_6}{864} \right) \, x^2 -  \Big( 4 \, \chi_{10}  \, t - \chi_{12} \Big) \, x\;,
\end{equation}
which is equivalent to Equation~(\ref{WEq.bak.alt2}) for $\chi_{10}(\tau)\not =0$. Equation~(\ref{WEq.bak.alt2}) remains well-defined for
$\chi_{10}(\tau)=0$ when the principally polarized abelian surface $\mathbf{A}_{\tau} $ degenerates to a product of two elliptic 
curves  $\mathbf{A}_{\tau} =\mathcal{E}_{\tau_1} \times \mathcal{E}_{\tau_2}$. It follows:
\begin{cor}
\label{cor:Chi10vanish}
For the Jacobian elliptic fibration~(\ref{WEq.bak.alt2}) the two fibers of type $I_2$ and $I_{10}^*$ coalesce and form a fiber of type $I_{12}^*$
if and only if the discriminant of the sextic~(\ref{Rosen2}) vanishes, i.e., 
$$
 \Delta_{F}  = \chi_{10} =  e = 0 \;.
$$
Equivalently, $\Delta_{F} =0$ iff the lattice polarization $H \oplus E_8(-1)\oplus E_7(-1)$ of the family~(\ref{WEq.bak.alt2}) extends to 
$H\oplus E_8(-1)\oplus E_8(-1)$.  
\end{cor}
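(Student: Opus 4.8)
The plan is to read the statement as a Kodaira--Tate fiber-type computation for the Weierstrass model~\eqref{WEq.bak.alt2}, followed by a lattice-theoretic identification. First I would record the elementary equivalences among the three vanishing conditions: by~\eqref{invariants} we have $\Delta_F = I_{10}(F) = -2^{14}\,\chi_{10}(\tau)$, and by~\eqref{moduli_abcde} we have $e = I_{10}/4 = -2^{12}\,\chi_{10}(\tau)$, so $\Delta_F = 0$, $\chi_{10} = 0$, and $e = 0$ hold simultaneously. Writing~\eqref{WEq.bak.alt2} as $y^2 = x^3 + A(t)\,x^2 + B(t)\,x$ with $A = t^3 - \tfrac{\psi_4}{48}\,t - \tfrac{\psi_6}{864}$ of degree three and $B = -(4\,\chi_{10}\,t - \chi_{12})$, the factored discriminant $\Delta = 16\,B^2\,(A^2 - 4B)$ shows that for $\chi_{10}\neq 0$ the finite singular fibers are a type-$I_2$ fiber at the simple zero $t_0 = \chi_{12}/(4\,\chi_{10})$ of $B$ (where the cubic in $x$ acquires a node, since $A(t_0)\neq 0$ generically) together with six type-$I_1$ fibers at the six zeros of $A^2 - 4B$, the ramification locus of the Satake sextic of Corollary~\ref{cor:position_nodes}, in agreement with Proposition~\ref{Prop:alternate}.

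The heart of the argument is the analysis at $t = \infty$. Completing the cube $x \mapsto x - A/3$ brings~\eqref{WEq.bak.alt2} into short Weierstrass form with $g_2 = \tfrac13 A^2 - B$ and $g_3 = -\tfrac{2}{27}A^3 + \tfrac13 AB$, viewed as sections of $\mathcal{O}(8)$ and $\mathcal{O}(12)$ on $\mathbb{P}^1$ presenting $\mathcal{Y}$ as a K3 Weierstrass model. For $\chi_{10}\neq 0$ one has $\deg g_2 = 6$, $\deg g_3 = 9$, and $\deg\Delta = 8$, so in the local parameter $s = 1/t$ the orders of vanishing at $t=\infty$ are $(\operatorname{ord}_\infty g_2,\operatorname{ord}_\infty g_3,\operatorname{ord}_\infty\Delta) = (2,3,16)$; since $\operatorname{ord}_\infty g_2 = 2 < 4$ the model is already minimal there, and Tate's algorithm returns $I_n^*$ with $n = \operatorname{ord}_\infty\Delta - 6 = 10$, the $I_{10}^*$ fiber. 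When $\chi_{10} = 0$ the degree-one term of $B$ disappears, so $B = \chi_{12}$ is a nonzero constant: the finite $I_2$ fiber is removed from $\mathbb{A}^1$ (equivalently $t_0 \to \infty$), while $\deg\Delta$ drops from $8$ to $6$. The orders at infinity become $(2,3,18)$, and Tate's algorithm now yields $I_n^*$ with $n = 18 - 6 = 12$, that is $I_{12}^*$. The discriminant bookkeeping is consistent: the contribution $2$ of the lost $I_2$ equals the increase $18-16$ at infinity, so the total Euler number stays $24$. Hence the $I_2$ and $I_{10}^*$ fibers coalesce into a fiber of type $I_{12}^*$ precisely when $\chi_{10}=0$, which is the first assertion.

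For the lattice statement I would argue through the transcendental lattice. By the discussion following Torelli's theorem, $\chi_{10}(\tau)=0$ holds if and only if $\mathbf{A}_\tau = \mathcal{E}_{\tau_1}\times\mathcal{E}_{\tau_2}$ splits as a product of elliptic curves, in which case $T(\mathbf{A}_\tau) = H\oplus H$. Since $\mathcal{Y} = \mathrm{SI}(\operatorname{Jac}\mathcal{C})$ carries a Shioda--Inose structure, Morrison's criterion gives a Hodge isometry $T(\mathcal{Y})\cong T(\mathbf{A}_\tau)$, so $T(\mathcal{Y})$ degenerates from $H\oplus H\oplus\langle -2\rangle$ to $H\oplus H$; taking orthogonal complements inside the K3 lattice $H^{\oplus 3}\oplus E_8(-1)^{\oplus 2}$ then upgrades the polarization from $H\oplus E_8(-1)\oplus E_7(-1)$ to $H\oplus E_8(-1)\oplus E_8(-1)$, which is the $E_8(-1)\oplus E_8(-1)$ branch of the two-way enlargement dichotomy recorded at the end of the preceding subsection (the other branch being Corollary~\ref{cor:Qvanish}). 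The fiber computation corroborates this branch: the root lattice of the fibers grows from $D_{14}\oplus A_1$ (from $I_{10}^*$ and $I_2$) to $D_{16}$ (from $I_{12}^*$), so the Picard rank rises by exactly one, matching the jump from $17$ to $18$, and together with the $2$-torsion glue the N\'eron--Severi lattice is the even unimodular lattice $H\oplus E_8(-1)\oplus E_8(-1)$. I expect the main obstacle to be the $t=\infty$ step: one must fix the normalization exhibiting~\eqref{WEq.bak.alt2} as a relatively minimal K3 Weierstrass model and verify the Tate conditions carefully enough to exclude neighbouring types such as $III^*$ or $I_{11}^*$, so that the vanishing orders $(2,3,16)$ and $(2,3,18)$ genuinely certify $I_{10}^*$ and $I_{12}^*$.
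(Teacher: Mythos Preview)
Your argument is correct and considerably more detailed than what the paper provides: the paper states this corollary (together with Corollary~\ref{cor:Qvanish}) as ``immediate'' from the discriminant formula for~\eqref{WEq.bak.alt} and the two-way lattice-enlargement dichotomy recorded just before Proposition~\ref{Prop:alternate}, and gives no further proof. Your explicit Tate computation at $t=\infty$---tracking the degree drop of $B$ from $1$ to $0$ when $\chi_{10}\to 0$, hence $(\operatorname{ord}_\infty g_2,\operatorname{ord}_\infty g_3,\operatorname{ord}_\infty\Delta)$ moving from $(2,3,16)$ to $(2,3,18)$---fills in exactly the verification the paper leaves to the reader, and your Euler-number bookkeeping is a nice sanity check.

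One remark on the last lattice sentence: the root lattice coming from $I_{12}^*$ together with the $2$-torsion glue gives $H\oplus D_{16}^+(-1)$ rather than $H\oplus E_8(-1)^{\oplus 2}$ on the nose. These are nevertheless isomorphic as abstract lattices because both are even unimodular of signature $(1,17)$ and indefinite even unimodular lattices are determined by their signature; you may want to say this explicitly, since $D_{16}^+$ and $E_8\oplus E_8$ are \emph{not} isomorphic in the definite case. With that caveat your transcendental-lattice argument via Morrison's criterion is exactly the mechanism the paper has in mind.
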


Using fibration~(\ref{WEq.bak.alt}) and fibration~(\ref{KumFib2}), the rational quotient map 
$$p: \mathcal{Y}=\mathrm{SI}(\operatorname{Jac}\mathcal{C}) \dashrightarrow \mathcal{X}=\operatorname{Kum}(\operatorname{Jac}\mathcal{C})$$ 
is realized as a fiberwise two-isogeny between elliptic surfaces, also known as a \emph{Van Geemen-Sarti involution}. Together with the dual isogeny one obtains a chain of rational maps  $\mathcal{Y} \dashrightarrow \mathcal{X} \dashrightarrow \mathcal{Y}$ called a \emph{Kummer sandwich} in \cite{MR2279280}.
In fact, the translation of the elliptic fiber $\mathcal{E}=\mathcal{Y}_t$ in Equation~(\ref{WEq.bak.alt}) by a two-torsion point
$\bar{S}_1: (x,y)=(0,0)$ yields the two-isogeneous fiber $\mathcal{E}'= \mathcal{E}/\lbrace S_0, S_1 \rbrace$ given by
\begin{equation}
\label{KumFib2b}
 Y^2 = X^3 - 2 (t^3 + a t + b) \, X^2 + \Big( (t^3 + a t + b)^2 - 4 \, e \, (ct+d)\Big) \, X,
\end{equation}
which is precisely the fibration~(\ref{KumFib2}). The fibrewise isogeny $\mathcal{E} \to \mathcal{E}'=\mathcal{X}_t$ is given by
\begin{equation}
\label{Isogeny}
 (x,y) \mapsto (X,Y)=\left( \frac{y^2}{x^2}, \frac{y \, (e \, (ct+d) -x^2)}{x^2} \right),
\end{equation}
and the dual isogeny $\mathcal{E}' \to \mathcal{E}$ is given by
\begin{equation}
\label{Isogeny_dual}
 (X,Y) \mapsto (x,y)=\left( \frac{Y^2}{4 X^2}, \frac{Y \, \Big(  (t^3 + a t + b)^2 - 4 e  (ct+d) -X^2\Big)}{X^2} \right) \;.
\end{equation}
The resulting \emph{Nikulin involution} $\varphi$ on the K3 surface $\mathcal{Y}$, i.e., the automorphism of order two preserving
the holomorphic two-form, in this case the two-form $dt \wedge dx/y$, is given by adding to a point the two-torsion section in each generic fiber $\mathcal{Y}_t$, i.e., 
\begin{equation*}
\label{eq:Nik_alt}
 (x,y) \mapsto (x,y) \overset{.}{+} \bar{S}_1 = \left(  \frac{e (ct + d)}{x}, - \frac{y \,e (ct + d)}{x^2}\right) .
\end{equation*}
A fiber of type $I_1$ is a rational curve
with a node, whereas a fiber of type $I_2$ looks like two copies of $\mathbb{P}^1$ intersecting in two distinct points.
The involution $\varphi$ is free on the generic fibers and
has exactly $8$ fixed points, namely the nodal points on the
$I_1$ fibers and the intersecting points on the $I_2$ fiber.  We have the following corollary:
\begin{cor}
\label{cor:position_nodes2}
The positions of the $I_1$ fibers in the Jacobian elliptic fibration~(\ref{KumFib2})
are contained in the fixed point set of the Nikulin involution $\varphi$.
\end{cor}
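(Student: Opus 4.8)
The plan is to read off the fixed-point locus of $\varphi$ directly from its explicit expression in the Weierstrass coordinates of the alternate fibration~(\ref{WEq.bak.alt}), and then to match the fixed points lying off the two-torsion section with the nodes of the $I_1$ fibers located by Corollary~\ref{cor:position_nodes}.

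First I would impose the fixed-point condition $\varphi(x,y)=(x,y)$ on a fiber $\mathcal{Y}_t$. Equating the first coordinate of $\varphi(x,y)=\bigl(e(ct+d)/x,\,-y\,e(ct+d)/x^2\bigr)$ with $x$ gives $x^2=e(ct+d)$, and substituting this into the second coordinate forces $y=-y$, hence $y=0$. Setting $y=0$ in~(\ref{WEq.bak.alt}) factors the cubic as $0=x\bigl(x^2+(t^3+at+b)\,x+e(ct+d)\bigr)$, so every fixed point has either $x=0$ or $x$ equal to a root of the quadratic factor.

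Next I would dispose of the two branches. If $x=0$, then $x^2=e(ct+d)$ forces $ct+d=0$, which places the fixed point on the unique $I_2$ fiber (at $t=-d/c$); these are the two intersection points of the $I_2$ fiber. If $x\neq0$, I would combine $x^2=e(ct+d)$ with the quadratic relation by substituting the former into the latter, obtaining $x=-2e(ct+d)/(t^3+at+b)$; squaring and comparing again with $x^2=e(ct+d)$ then yields exactly
\[
(t^3+at+b)^2-4\,e\,(ct+d)=f(t)=0,
\]
so these fixed points sit precisely over the roots of $f$, i.e.\ over the $I_1$ fibers by Corollary~\ref{cor:position_nodes}.

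Finally I would check that such a fixed point is genuinely the \emph{node} of the $I_1$ fiber and not a smooth point. Along the roots of $f$ one has the single identity $4e(ct+d)=(t^3+at+b)^2$; this makes the discriminant of the quadratic $x^2+(t^3+at+b)x+e(ct+d)$ vanish, so its double root $x=-(t^3+at+b)/2$ is the singular point of the nodal cubic, while the same identity simplifies the fixed-point coordinate $-2e(ct+d)/(t^3+at+b)$ to the identical value $-(t^3+at+b)/2$. Hence the fixed point with $x\neq0$ is exactly the node, so each $I_1$ fiber contributes its node to the fixed-point set and the positions of the $I_1$ fibers lie in the fixed locus of $\varphi$. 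The only computational content is this final identification, which rests entirely on the relation $4e(ct+d)=(t^3+at+b)^2$ valid along the roots of $f$; I do not anticipate a genuine obstacle.
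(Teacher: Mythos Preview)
Your computation is correct and complete; in fact you prove slightly more than the paper states, since you verify that each fixed point over a root of $f$ is precisely the node of the corresponding $I_1$ fiber. The paper itself does not give an explicit proof of this corollary: it simply asserts in the preceding paragraph that the fiberwise translation by two-torsion is free on smooth fibers and has exactly eight fixed points, namely the six nodes of the $I_1$ fibers and the two intersection points of the $I_2$ fiber. That reasoning is structural (translation by a nonzero point on a smooth elliptic curve has no fixed points, and on a singular fiber the involution must fix the singular locus), whereas your argument is an explicit coordinate computation from the formula for $\varphi$; your approach has the advantage of being self-contained and of pinning down the fixed point as $x=-(t^3+at+b)/2$ exactly, while the paper's approach makes clearer why the count of fixed points is eight without any calculation.
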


\subsubsection{The standard fibration}
The second elliptic fibration, called the \emph{standard fibration}, is the Jacobian elliptic fibration with two distinct special fibers of Kodaira-types $II^*$ and $III^*$, respectively. 
By setting
\begin{equation}
\label{transfo_std}
 \mathbf{X} = - \dfrac{2^7\, \chi_{10}^3 \, t \, x}{3^5} \;, \quad \mathbf{Y}=\dfrac{2^7 \, \sqrt6 \, i \, \chi_{10}^3 \, y}{3^5} \;,\quad \mathbf{W}=\dfrac{2^8 \, \chi_{10}^3 \, t^3}{3^6} \;, \quad \mathbf{Z}= \dfrac{\chi_{10}^2 \, t^2}{2^4 \, 3^9} \;,
\end{equation}
in Equation~(\ref{Inose}) we obtain
\begin{equation}
\label{WEq.bak}
 y^2 = x^3 + t^3 \, (a \, t + c) \, x + t^5 \, (e \, t^2 + b \, t +d) \;.
\end{equation}
The fibration (\ref{WEq.bak}) was investigated in \cite{MR2427457}*{Theorem~\!11}.
The birational transformation between the standard and the alternate fibration is given by
\begin{equation}
 \Big( t, x, y \Big)_{\text{std}} = \Big( \frac{x}{e}, \frac{t x^2}{e^2}, - \frac{x^2 y}{e^3} \Big)_{\text{alt}} \;,
\end{equation}
and combining it with Equation~(\ref{eq:Nik_alt}) recovers the Nikulin involution for the standard elliptic fibration
given by (cf.~\cite{MR2427457}*{Theorem~\!11})
\begin{equation}
 \Big( t, x, y \Big) \mapsto \left( \frac{cx + d t^2}{e t^3}, \frac{x (cx + d t^2)^2}{e^2 t^8}, - \frac{y (cx + d t^2)^3}{e^3 t^{12}} \right).
\end{equation}

It then follows:
\begin{prop}
\label{Prop:standard}
For a generic genus-two curve $\mathcal{C}$ there is a Jacobian elliptic fibration $\bar{\pi}_{\text{std}}: \bar{\mathcal{Y}\,} \to \mathbb{P}^1$ on 
$\mathcal{Y}=\mathrm{SI}(\operatorname{Jac}\mathcal{C})$ given by Equation~(\ref{WEq.bak}) with 5 singular fibers of type $I_1$, one fiber of 
type $II^*$, one fiber of type $III^*$, and a trivial Mordell-Weil group with elliptic parameter $t=t_{\text{std}}$.
\end{prop}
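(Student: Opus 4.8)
The plan is to read the fiber configuration straight off the Weierstrass form~(\ref{WEq.bak}) by Tate's algorithm and then to determine the Mordell--Weil group by a lattice argument. Writing~(\ref{WEq.bak}) as $y^2 = x^3 + A(t)\,x + B(t)$ with $A(t)=t^3(a\,t+c)$ and $B(t)=t^5(e\,t^2+b\,t+d)$, one has $\deg A = 4\le 8$ and $\deg B = 7 \le 12$, so the relatively minimal elliptic surface attached to~(\ref{WEq.bak}) is a K3 surface; the substitution~(\ref{transfo_std}) into Equation~(\ref{Inose}) identifies this K3 surface with $\mathcal{Y}=\mathrm{SI}(\operatorname{Jac}\mathcal{C})$.

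Next I would compute the discriminant. Since
\[
 4\,A^3 + 27\,B^2 = t^9\,P(t), \qquad P(t) = 4\,(a\,t+c)^3 + 27\,t\,(e\,t^2+b\,t+d)^2,
\]
with $P$ a quintic and $P(0)=4\,c^3\neq 0$, the vanishing orders $\bigl(v(A),v(B),v(\Delta)\bigr)$ equal $(3,5,9)$ at $t=0$, which is Kodaira type $III^*$ and minimal because $v(A)=3<4$. Passing to $t=\infty$ via $t=1/s$, $x=X/s^4$, $y=Y/s^6$ replaces the coefficients by $a\,s^4+c\,s^5$ and $e\,s^5+b\,s^6+d\,s^7$, so the orders at $s=0$ are $(4,5,10)$, i.e. type $II^*$, again minimal since $v(B)=5<6$. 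The remaining zeros of the discriminant are the five roots of $P$; for a generic curve $\mathcal{C}$ these are simple and distinct from $0$, producing five fibers of type $I_1$. As a consistency check, the topological Euler characteristics sum to $9+10+5=24$, as required for a K3 surface.

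For the Mordell--Weil group I would use the Shioda--Tate formula. The zero section, a general fiber, and the non-identity components of the $II^*$ and $III^*$ fibers span the trivial lattice $\mathrm{Triv}=H\oplus E_8(-1)\oplus E_7(-1)$ of rank $17$; since $\mathcal{Y}$ has Picard rank $17$, the Mordell--Weil rank is $0$. To see that there is no torsion, note that $|\operatorname{disc}\mathrm{Triv}|=2$, while by Nikulin's theory $|\operatorname{disc}\operatorname{NS}(\mathcal{Y})|=|\operatorname{disc}T(\mathcal{Y})|=2$ because $T(\mathcal{Y})\cong H\oplus H\oplus\langle-2\rangle$. The index relation $|\operatorname{disc}\mathrm{Triv}|=[\operatorname{NS}(\mathcal{Y}):\mathrm{Triv}]^2\,|\operatorname{disc}\operatorname{NS}(\mathcal{Y})|$ then forces $[\operatorname{NS}(\mathcal{Y}):\mathrm{Triv}]=1$, so that $\operatorname{MW}(\bar\pi_{\mathrm{std}})\cong\operatorname{NS}(\mathcal{Y})/\mathrm{Triv}$ is trivial.

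The main obstacle is the analysis at $t=\infty$: one must perform the rescaling correctly and confirm minimality of the model so that the fiber is genuinely $II^*$ rather than a non-minimal degeneration, and check that $A$ and $B$ do not acquire extra vanishing there. A secondary point is the genericity assertion that $P$ has five distinct simple roots away from $t=0$; this can fail only on a divisor in moduli where two $I_1$ fibers coalesce, consistent with Corollary~\ref{cor:Qvanish}. Alternatively, the singular-fiber list and the triviality of $\operatorname{MW}$ may be quoted from Kumar~\cite{MR2427457}*{Theorem~11}, the birational change of variables between the standard and alternate fibrations supplying the identification.
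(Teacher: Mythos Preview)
Your argument is correct. The paper itself does not give a detailed proof of this proposition: after recording the substitution~(\ref{transfo_std}) and the resulting Weierstrass equation~(\ref{WEq.bak}), it simply notes that ``the fibration~(\ref{WEq.bak}) was investigated in \cite{MR2427457}*{Theorem~11}'' and then states the proposition as a consequence. Your proposal, by contrast, supplies a self-contained verification: you compute the vanishing orders of $A$, $B$, $\Delta$ at $t=0$ and $t=\infty$ to read off the $III^*$ and $II^*$ fibers via Tate's algorithm, you verify the five $I_1$ fibers from the quintic $P$, and you finish with a clean Shioda--Tate plus discriminant--index argument to obtain the trivial Mordell--Weil group. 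This is more explicit than the paper and is exactly the kind of direct check one would want; your alternative of quoting Kumar at the end is precisely the paper's route. One small remark: your reference to Corollary~\ref{cor:Qvanish} for the coalescence locus is slightly off, since that corollary is stated for the alternate fibration; the paper notes just after the proposition that an analogous statement holds for the standard fibration.
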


For the standard fibration, there are statements analogous to Corollary~\ref{cor:Qvanish} or Corollary~\ref{cor:Chi10vanish}
when two $I_1$ fibers are coalescing to form a fiber of type $II$ or a fiber from type $III^*$ goes to type $II^*$,
respectively \cite{MR3366121}.

\subsection{Relation to string theory}
In string theory a nontrivial connection appears as the eight-dimensional manifestation of a phenomenon called \emph{F-theory/heterotic string duality}. 
This correspondence leads to a geometric picture that links together moduli spaces for two seemingly distinct types of geometrical objects: Jacobian elliptic fibrations on K3 surfaces 
and flat bundles over elliptic curves \cite{MR1675162}.

In string theory compactifications of the so-called type-IIB string in which the complex coupling varies over a base are generically referred to as F-theory.
The simplest such construction corresponds to K3 surfaces that are elliptically fibered over $\mathbb{P}^1$ with a section, in physics 
equivalent to type-IIB string theory compactified on $\mathbb{P}^1$ and hence eight-dimensional in the presence of 7-branes \cite{MR1416354}.
In this way, a Jacobian elliptic K3 surface with elliptic fibers $\mathcal{E}_{\tau }=\mathbb{C}/(\mathbb{Z}\oplus \mathbb{Z}\tau )$ defines
an F-theory vacuum in eight dimensions where the complex-valued scalar field $\tau$ of the type-IIB string theory is now allowed to be multi-valued.
The Kodaira-table of singular fibers gives a precise dictionary between the characteristics of the Jacobian elliptic fibrations and the content of the 
7-branes present in the physical theory.

To make contact with the F-theory/heterotic string duality one considers Jacobian elliptic fibrations on a special K3 surface, namely the Shioda-Inose
surface $\mathrm{SI}(\operatorname{Jac}\mathcal{C})$ of the principally polarized abelian surface $\operatorname{Jac}(\mathcal{C})$
where $\mathcal{C}$ is a generic genus-two curve.  The K3 surface $\mathcal{Y}=\mathrm{SI}(\operatorname{Jac}\mathcal{C})$ carries a Nikulin involution $\varphi$ such that the quotient
$\mathcal{Y}/\lbrace\mathbb{I}, \varphi\rbrace$ is birational to the Kummer surface  $\mathrm{Kum}(\operatorname{Jac}\mathcal{C})$ and we have a Hodge-isometry 
between the transcendental lattices $T(\mathcal{Y}) \cong T(\operatorname{Jac}\mathcal{C})$.
 In this way, a one-to-one correspondence between two different types of surfaces with the same Hodge-theoretic data is established:
 the principally polarized abelian surfaces $\operatorname{Jac}\mathcal{C}$ and the algebraic K3 surfaces $\mathcal{Y}$
 polarized by the rank-17 lattice $H \oplus E_8(-1)\oplus E_7(-1)$.

To see the connection to the heterotic string theory, let us first consider the limit as the Jacobian variety degenerates to a product of two elliptic curves $%
\mathcal{E}_{1}\times \mathcal{E}_2$ and the involved K3 surfaces have Picard-rank 18, obtained by letting $\chi_{10}\to 0$.
 This limit describes a well-understood case of the
F-theory/heterotic string duality in the absence of any additional bundle data given by so-called Wilson lines. In fact,
the moduli space of Jacobian elliptic K3 surfaces with $H\oplus  E_8(-1)\oplus E_8(-1)$ lattice polarization is identified with the
moduli space of the heterotic string vacua with gauge algebra $\mathfrak{e}_8 \oplus \mathfrak{e}_8$ and 
$\mathfrak{so}(32)$, respectively, compactified on a two-torus $T^2$ (cf.~\!\cites{MR1416960}). If any flat connection on $T^2$ is assumed to be trivial, 
the only two moduli of such a string theory, i.e,  the K\"{a}hler metric and the $B$-field of $T^2$, identify the torus with the elliptic
curves $\mathcal{E}_{1}$ and $\mathcal{E}_2$, respectively. Notice that the existence of two inequivalent elliptic fibrations, the standard and the alternate
fibration, is essential and corresponds to the two possible gauge groups of the heterotic string.

The first author together with David Morrison studied in \cite{MR3366121} the \emph{non-geometric} heterotic string
compactified on $T^2$ that produces an eight-dimensional effective theory corresponding to the Jacobian elliptic K3 surfaces with Picard-rank 17 when $\chi_{10}\not=0$. 
The corresponding heterotic models were called non-geometric because the K\"ahler and 
complex structures on $T^2$, and the Wilson line values, are not distinguished
but instead are mingled together. The fibration in Equation~(\ref{WEq.bak})
then describes a model dual to the $\mathfrak{e}_8\oplus \mathfrak{e}_8$ heterotic string, with an unbroken gauge algebra of $\mathfrak{e}_8\oplus \mathfrak{e}_7$ 
ensuring that only a single Wilson line expectation value is nonzero and all remaining Wilson lines values associated to the $E_8(-1)\oplus E_7(-1)$
sublattice be trivial. Similarly, the fibration in Equation~(\ref{WEq.bak.alt}) 
gives the analogous story for the $\mathfrak{so}(32)$ heterotic string:  the fibration
describes a model dual to the $\mathfrak{so}(32)$ heterotic string, with an unbroken gauge algebra 
of $\mathfrak{so}(28)\oplus \mathfrak{su}(2)$.
By a result of Vinberg \cite{MR3235787} and its interpretation in string theory given in \cite{MR3366121}, the function field of
the Narain moduli space of these heterotic theories  turns out to be generated by the ring of Siegel modular forms of 
{\it even weight.} This is the physical manifestation of why the fibrations~(\ref{WEq.bak}) and (\ref{WEq.bak.alt}) only depend
on the polynomial ring in the four free generators of degrees $4$, $6$, $10$ and $12$ given by the even Siegel modular forms.\footnote{In contrast, Igusa showed in \cite{MR0229643} that for the full
ring of modular forms, one needs an additional generator $\chi_{35}$ which is algebraically dependent on the others.}

Generic non-geometric compactification constructed from the family of 
lattice-polarized K3 surfaces in Equation~(\ref{Inose}) will have two types of five-branes analogous to a single D7-brane in F-theory.  
From the heterotic side, these five-brane solitons are easy to see:  in the situation of Corollary~\ref{cor:Qvanish} we have an additional
gauge symmetry enhancement by a factor of $\mathfrak{su}(2)$, and the parameters
of the theory will include a Coulomb branch with Weyl group $W_{\mathfrak{su}(2)}=\mathbb{Z}_2$. Therefore, there is a five-brane
solution with a $\mathbb{Z}_2$ ambiguity when encircling the location
in the moduli space of enhanced gauge symmetry.
The other five-brane solution is similar:
in the situation of Corollary~\ref{cor:Chi10vanish} the gauge group enhances to $\mathfrak{so}(32)$ gauge symmetry
with a similar $\mathbb{Z}_2$ ambiguity.

Further specializations of the multi-parameter family of K3 surfaces in Equation~(\ref{Inose})
are obtained from degenerations of the underlying genus-two curves. As we have seen, the parameters 
in Equation~(\ref{Inose}) are Siegel modular forms of even degree or,
equivalently, the Igusa-Clebsch invariants of a binary sextic. Namikawa and 
Ueno gave a geometrical classification of all (degenerate) fibers in pencils of curves of genus two in \cite{MR0369362}.
For each such pencil allowed by their classification one can now apply the heterotic/F-theory duality map to 
express the heterotic background in terms of F-theory. Each resulting F-theory compactification will be a family of 
Jacobian elliptic K3 surfaces. Notice that any such degenerating pencil of genus-two curves is not the description of a heterotic model itself, 
but rather a computational tool for providing an interesting class of degenerations and their associated five-branes.
Moreover, the F-theory background dual to a given five-brane defect on the heterotic side will in general be highly singular. For some of theses cases the singularities can be resolved
by performing a finite number of blow-ups in the base, and the resulting smooth geometry was constructed in \cite{Font:2016aa}.

Conversely, the combination of Proposition~\ref{Satake6} and Corollary~\ref{cor:position_nodes} give a computational recipe for how a degenerating pencil of 
genus-two curves is obtained  from an F-theory background dual to the $\mathfrak{so}(32)$ string with only one non-vanishing Wilson line.
In comparison, the work of the authors in \cite{Malmendier:2016aa} always allows for the construction of an explicit 
pencil of sextic curves given any family of Igusa invariants over a quadratic extension of the full ring of modular forms.
However, this construction does not use the F-theoretic data of the $\mathfrak{so}(32)$ string background, i.e., the Jacobian elliptic fibration,
and requires lifting of the family to a covering space of the moduli space. 
In contrast, Corollary~\ref{cor:position_nodes} shows that the Satake sextic is inherently 
manifest in the Jacobian elliptic fibration~(\ref{WEq.bak.alt}).

We rephrase Corollary~\ref{cor:position_nodes} according to the discussion in this section as follows:
\begin{cor}
\label{cor:position_nodes_b}
The positions of the 7-branes with string charge $(1,0)$ in the F-theory model, dual to the $\mathfrak{so}(32)$ heterotic string with an unbroken gauge algebra 
of $\mathfrak{so}(28)\oplus \mathfrak{su}(2)$ and only a single non-vanishing Wilson line expectation value and no additional gauge-extension, are given by the loci of $I_1$ fibers in 
the Jacobian elliptic fibration~(\ref{WEq.bak.alt}) on the Shioda-Inose surface $\mathrm{SI}(\operatorname{Jac}\mathcal{C})$ of a generic genus-two curve $\mathcal{C}$
and form the ramification locus of the Satake sextic~(\ref{SatakeSextic_a}) corresponding to $\mathcal{C}$, or, equivalently,
the genus-two component of the fixed point set of the Nikulin involution $\varphi$ on $\mathrm{SI}(\operatorname{Jac}\mathcal{C})$.
\end{cor}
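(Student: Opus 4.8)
The plan is to treat this corollary as a synthesis: it repackages the purely geometric Corollaries~\ref{cor:position_nodes} and~\ref{cor:position_nodes2} in the language of the F-theory/heterotic dictionary developed in this section. First I would fix the physical interpretation of the fibration. By the discussion following Proposition~\ref{Prop:alternate} and~\cite{MR3366121}, the alternate fibration~\eqref{WEq.bak.alt} on $\mathcal{Y}=\mathrm{SI}(\operatorname{Jac}\mathcal{C})$ is precisely the F-theory model dual to the $\mathfrak{so}(32)$ heterotic string with a single non-vanishing Wilson line; its non-abelian gauge algebra is read off from the reducible fibers via the Kodaira dictionary, the fiber of type $I_{10}^*$ contributing the $D_{14}=\mathfrak{so}(28)$ factor and the fiber of type $I_2$ contributing the $A_1=\mathfrak{su}(2)$ factor. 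That there is \emph{no} further enhancement is exactly the genericity hypothesis: by Corollaries~\ref{cor:Qvanish} and~\ref{cor:Chi10vanish} the two $I_1$ fibers fuse into an extra $I_2$ (an extra $\mathfrak{su}(2)$) only if $\Delta_f=Q=0$, and the $I_2$ and $I_{10}^*$ fuse into an $I_{12}^*$ (an enhancement to $\mathfrak{so}(32)$) only if $\Delta_F=\chi_{10}=0$, both of which we exclude.

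Next I would invoke the standard correspondence between 7-branes and singular fibers: a 7-brane of string charge $(1,0)$, i.e.\ a D7-brane, sits where the elliptic fiber degenerates to a nodal rational curve whose $\operatorname{SL}_2(\Z)$-monodromy is conjugate to $\bigl(\begin{smallmatrix}1&1\\0&1\end{smallmatrix}\bigr)$, that is, at a fiber of Kodaira type $I_1$. By Proposition~\ref{Prop:alternate} the fibration~\eqref{WEq.bak.alt} has exactly six such fibers, located at the six simple zeros of the factor $(t^3+at+b)^2-4\,e\,(ct+d)$ of its discriminant. Since the fibrewise two-isogeny~\eqref{Isogeny} is the identity on the base and carries these six $I_1$ fibers to the six $I_2$ fibers of~\eqref{KumFib2}, Corollary~\ref{cor:position_nodes} applies and identifies the $(1,0)$-brane positions with the roots of $f(t)$ in~\eqref{Satake_b}; under the affine substitution $t=-x/3$ these become the roots $x_1,\dots,x_6$ of the Satake sextic~\eqref{SatakeSextic}, i.e.\ the ramification locus of the genus-two curve $\mathcal{S}\colon y^2=f(x)$. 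This establishes the first asserted equality.

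For the final equivalence I would make the phrase \emph{genus-two component} precise. By Corollary~\ref{cor:position_nodes2} the six $I_1$-nodes are among the eight fixed points of the Nikulin involution $\varphi$. I would then exhibit the $\varphi$-invariant bisection $B=\{\,y=0,\ x^2+(t^3+at+b)\,x+e\,(ct+d)=0\,\}$ of~\eqref{WEq.bak.alt}: its defining quadratic has discriminant $(t^3+at+b)^2-4\,e\,(ct+d)=f(t)$, so $B$ is a double cover of $\mathbb{P}^1_t$ branched over the six zeros of $f$, hence a genus-two curve, and since the product of the two roots equals $e(ct+d)$ the involution $\varphi\colon x\mapsto e(ct+d)/x$ maps $B$ to itself and acts as the hyperelliptic involution, swapping the two sheets. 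Its six Weierstrass (fixed) points are exactly the six $I_1$-nodes, while the remaining two fixed points lie on the $I_2$ fiber and account for the $\mathfrak{su}(2)$ factor; thus the $(1,0)$-brane locus is the genus-two piece of the fixed-point data, in one-to-one correspondence with the Weierstrass points of $\mathcal{S}$. The main obstacle here is conceptual rather than computational: because $\varphi$ is symplectic its fixed locus is the discrete set of eight points, so I must organize this set by the $\varphi$-invariant curves on which the points lie, argue that the six points sitting on the genus-two bisection $B$ constitute ``the genus-two component,'' and check that this decomposition is compatible with the two-isogeny relating~\eqref{WEq.bak.alt} and~\eqref{KumFib2} so that it descends consistently to the ramification locus of the Satake sextic.
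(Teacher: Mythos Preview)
Your proposal is correct and follows the paper's own approach: the paper presents this corollary without a formal proof, introducing it explicitly as a rephrasing of Corollary~\ref{cor:position_nodes} (together with Corollary~\ref{cor:position_nodes2}) in the F-theory/heterotic dictionary developed in the surrounding discussion. Your final paragraph, exhibiting the $\varphi$-invariant bisection $B=\{y=0,\ x^2+(t^3+at+b)x+e(ct+d)=0\}$ as the genus-two curve on which six of the eight Nikulin fixed points sit, supplies a concrete justification for the phrase ``genus-two component of the fixed point set'' that the paper itself leaves implicit.
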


\begin{rem}
The section $(x,y)=(0,0)$ defines an element of order  $2$ in the Mordell-Weil group of the Jacobian elliptic fibration~(\ref{WEq.bak.alt}).  It follows
as in \cites{MR1416960,MR1643100} that the actual gauge group of this heterortic model
is $(\operatorname{Spin}(28)\times SU(2))/\mathbb{Z}_2$.
\end{rem}

In turn, the roots of the Satake sextic then determine a sextic curve~(\ref{Rosen2}) with full level-two structure 
by using Equation~(\ref{Theta2Satake}) and Equation~(\ref{Picard2}). 

\begin{landscape}
\section{Appendix}
The Igusa-Clebsch invariants for the curve~(\ref{Rosen2}) in Rosenhain normal form
are given by the following expressions:
\begingroup\makeatletter\def\f@size{7}\check@mathfonts\def\maketag@@@#1{\hbox{\m@th\large\normalfont#1}}%
 \begin{align*}
I_2 & = 40\,\lambda_1\lambda_2\lambda_3-16\, \left( 1+ \lambda_1+ \lambda_2 + \lambda_3 \right)  \left( \lambda_1\lambda_2\lambda_3+\lambda_2\lambda_1+\lambda_3
\lambda_1+\lambda_2\lambda_3 \right) +6\, \left( \lambda_2\lambda_1+\lambda_3\lambda_1+\lambda_2\lambda_3+\lambda_1+\lambda_2+\lambda_3 \right) ^2,\\
I_4 &= 
-12\, \left( \lambda_1+\lambda_2+\lambda_3 \right) ^3
\lambda_1\lambda_2\lambda_3+4\, \left( \lambda_1+
\lambda_2+\lambda_3 \right) ^2 \left( \lambda_2\lambda_1+\lambda_3\lambda_1+\lambda_2\lambda_3 \right) ^2-4\, \left( \lambda_1+\lambda_2+\lambda_3 \right) ^2
 \left( \lambda_2\lambda_1+\lambda_3\lambda_1+\lambda_2\lambda_3 \right) \lambda_1\lambda_2\lambda_3\\
& +4\, \left( \lambda_1+\lambda_2+\lambda_3 \right) ^2\lambda_1^2\lambda_2^2\lambda_3^2+12\, \left( \lambda_1+\lambda_2
 +\lambda_3 \right) ^2\lambda_1\lambda_2\lambda_3-4\, \left( \lambda_1+\lambda_2+\lambda_3 \right)  \left( \lambda_2\lambda_1
 +\lambda_3\lambda_1+\lambda_2\lambda_3 \right) ^2\\
 &+44\, \left( \lambda_1+\lambda_2+\lambda_3 \right)  \left( \lambda_2\lambda_1
+\lambda_3\lambda_1+\lambda_2\lambda_3 \right) \lambda_1\lambda_2\lambda_3-12\, \left( \lambda_2\lambda_1+\lambda_3\lambda_1
+\lambda_2\lambda_3 \right) ^3+12\, \left( \lambda_2\lambda_1+\lambda_3\lambda_1+\lambda_2\lambda_3 \right) ^2\lambda_1\lambda_2
\lambda_3\\
&-12\, \left( \lambda_2\lambda_1+\lambda_3\lambda_1+\lambda_2\lambda_3 \right) \lambda_1^2\lambda_2^2\lambda_3^2-12\, \left( \lambda_1+
\lambda_2+\lambda_3 \right) \lambda_1\lambda_2\lambda_3+4\, \left( \lambda_2\lambda_1+\lambda_3\lambda_1
+\lambda_2\lambda_3 \right) ^2-72\,\lambda_1^2\lambda_2^2\lambda_3^2,
\stepcounter{equation}\tag{\theequation}\label{IgRos}\\
I_6 &= 
-24\, \left( \lambda_1+\lambda_2+\lambda_3 \right) ^3
\lambda_1\lambda_2\lambda_3+10\, \left( \lambda_1+
\lambda_2+\lambda_3 \right) ^2\lambda_1^2\lambda_2^2\lambda_3^2+32\, \left( \lambda_2\lambda_1+
\lambda_3\lambda_1+\lambda_2\lambda_3 \right) ^2
\lambda_1\lambda_2\lambda_3+150\, \left( \lambda_2
\lambda_1+\lambda_3\lambda_1+\lambda_2\lambda_3
 \right) \lambda_1^2\lambda_2^2\lambda_3^2\\
 &+8\,
 \left( \lambda_1+\lambda_2+\lambda_3 \right) ^2 \left( 
\lambda_2\lambda_1+\lambda_3\lambda_1+\lambda_2
\lambda_3 \right) ^2\lambda_1^2\lambda_2^2\lambda_3^2+118\, \left( \lambda_1+\lambda_2+\lambda_3 \right) ^3 \left( \lambda_2\lambda_1+\lambda_3
\lambda_1+\lambda_2\lambda_3 \right) \lambda_1\lambda_2\lambda_3\\
&-194\, \left( \lambda_1+\lambda_2+\lambda_3 \right) ^2 \left( \lambda_2\lambda_1+\lambda_3
\lambda_1+\lambda_2\lambda_3 \right) \lambda_1^2\lambda_2^2\lambda_3^2+118\, \left( \lambda_1+
\lambda_2+\lambda_3 \right)  \left( \lambda_2\lambda_1
+\lambda_3\lambda_1+\lambda_2\lambda_3 \right) ^3
\lambda_1\lambda_2\lambda_3\\
&-66\, \left( \lambda_1+
\lambda_2+\lambda_3 \right)  \left( \lambda_2\lambda_1
+\lambda_3\lambda_1+\lambda_2\lambda_3 \right) ^2\lambda_1^2\lambda_2^2\lambda_3^2+76\, \left( 
\lambda_1+\lambda_2+\lambda_3 \right)  \left( \lambda_2\lambda_1+\lambda_3\lambda_1+\lambda_2\lambda_3
 \right) \lambda_1^3\lambda_2^3\lambda_3^3\\
 &-194\, \left( \lambda_1+\lambda_2+\lambda_3 \right)  \left( 
\lambda_2\lambda_1+\lambda_3\lambda_1+\lambda_2
\lambda_3 \right) ^2\lambda_1\lambda_2\lambda_3+412
\, \left( \lambda_1+\lambda_2+\lambda_3 \right)  \left( 
\lambda_2\lambda_1+\lambda_3\lambda_1+\lambda_2
\lambda_3 \right) \lambda_1^2\lambda_2^2\lambda_3^2\\
&+20\, \left( \lambda_1+\lambda_2+\lambda_3
 \right) ^4 \left( \lambda_2\lambda_1+\lambda_1\lambda_3+\lambda_2\lambda_3 \right) \lambda_1\lambda_2
\lambda_3-36\, \left( \lambda_1+\lambda_2+\lambda_3
 \right) ^3 \left( \lambda_2\lambda_1+\lambda_1\lambda_3+\lambda_2\lambda_3 \right) ^2\lambda_1\lambda_2
\lambda_3\\
&+20\, \left( \lambda_1+\lambda_2+\lambda_3
 \right) ^3 \left( \lambda_2\lambda_1+\lambda_1\lambda_3+\lambda_2\lambda_3 \right) \lambda_1^2\lambda_2^2\lambda_3^2-8\, \left( \lambda_1+\lambda_2+
\lambda_3 \right) ^2 \left( \lambda_2\lambda_1+\lambda_1\lambda_3+\lambda_2\lambda_3 \right) ^3\lambda_1
\lambda_2\lambda_3\\
&+8\, \left( \lambda_1+\lambda_2+\lambda_3 \right) ^2 \left( \lambda_2\lambda_1+\lambda_1\lambda_3+\lambda_2\lambda_3 \right) ^2
-252\,\lambda_1^3\lambda_2^3\lambda_3^3-36\,\lambda_1^4\lambda_2^4\lambda_3^4-24\, \left( \lambda_2\lambda_1+\lambda_1\lambda_3+\lambda_2\lambda_{3
} \right) ^5+48\, \left( \lambda_2\lambda_1+\lambda_1
\lambda_3+\lambda_2\lambda_3 \right) ^4\\
&-24\, \left( \lambda_2\lambda_1+\lambda_1\lambda_3+\lambda_2
\lambda_3 \right) ^3+8\, \left( \lambda_1+\lambda_2+
\lambda_3 \right) ^4 \left( \lambda_2\lambda_1+\lambda_1\lambda_3+\lambda_2\lambda_3 \right) ^2-8\, \left( 
\lambda_1+\lambda_2+\lambda_3 \right) ^3 \left( \lambda_2\lambda_1+\lambda_1\lambda_3+\lambda_2\lambda_3 \right) ^3\\
&+8\, \left( \lambda_1+\lambda_2+\lambda_3
 \right) ^2 \left( \lambda_2\lambda_1+\lambda_1\lambda_3+\lambda_2\lambda_3 \right) ^4 -8\, \left( \lambda_1+
\lambda_2+\lambda_3 \right) ^3 \left( \lambda_2\lambda_1+\lambda_1\lambda_3+\lambda_2\lambda_3 \right) ^2-36\, \left( \lambda_1+\lambda_2+\lambda_3 \right) ^2
 \left( \lambda_2\lambda_1+\lambda_1\lambda_3+\lambda_2\lambda_3 \right) ^3\\
 &+20\, \left( \lambda_1+\lambda_{2
}+\lambda_3 \right)  \left( \lambda_2\lambda_1+\lambda_1\lambda_3+\lambda_2\lambda_3 \right) ^4+20\, \left( 
\lambda_1+\lambda_2+\lambda_3 \right)  \left( \lambda_2\lambda_1+\lambda_1\lambda_3+\lambda_2\lambda_3
 \right) ^3-36\,\lambda_1^2\lambda_2^2\lambda_3^2-24\, \left( \lambda_1+\lambda_2+\lambda_3 \right) ^5\lambda_1\lambda_2\lambda_3\\
 &+48\, \left( \lambda_1+
\lambda_2+\lambda_3 \right) ^4\lambda_1^2\lambda_2^2\lambda_3^2-24\, \left( \lambda_1+\lambda_2+
\lambda_3 \right) ^3\lambda_1^3\lambda_2^3\lambda_3^3+24\, \left( \lambda_1+\lambda_2+\lambda_3 \right) ^4\lambda_1\lambda_2\lambda_3-136\, \left( 
\lambda_1+\lambda_2+\lambda_3 \right) ^3\lambda_1^2\lambda_2^2\lambda_3^2\\
&+32\, \left( \lambda_1+
\lambda_2+\lambda_3 \right) ^2\lambda_1^3\lambda_2^3\lambda_3^3+24\, \left( \lambda_2\lambda_1+
\lambda_1\lambda_3+\lambda_2\lambda_3 \right) ^4 \lambda_1\lambda_2\lambda_3-24\, \left( \lambda_2
\lambda_1+\lambda_1\lambda_3+\lambda_2\lambda_3\right) ^3\lambda_1^2\lambda_2^2\lambda_3^2
+150\, \left( \lambda_1+\lambda_2+\lambda_3 \right) \lambda_1^3\lambda_2^3\lambda_3^3\\
&-136\, \left( 
\lambda_2\lambda_1+\lambda_1\lambda_3+\lambda_2
\lambda_3 \right) ^3\lambda_1\lambda_2\lambda_3+10\,
 \left( \lambda_2\lambda_1+\lambda_1\lambda_3+\lambda_2\lambda_3 \right) ^2\lambda_1^2\lambda_2^2\lambda_3^2-42\, \left( \lambda_2\lambda_1+\lambda_{{1}
}\lambda_3+\lambda_2\lambda_3 \right) \lambda_1^3\lambda_2^3\lambda_3^3\\
&-42\, \left( \lambda_1+
\lambda_2+\lambda_3 \right) \lambda_1^2\lambda_2^2\lambda_3^2+76\, \left( \lambda_1+\lambda_2+
\lambda_3 \right)  \left( \lambda_2\lambda_1+\lambda_1\lambda_3+\lambda_2\lambda_3 \right) \lambda_1\lambda_2
\lambda_3-66\, \left( \lambda_1+\lambda_2+\lambda_3 \right) ^2 \left( \lambda_2\lambda_1+\lambda_1\lambda_3
+\lambda_2\lambda_3 \right) \lambda_1\lambda_2\lambda_3, \\
I_{10} & =\lambda_1^2\lambda_2^2\lambda_3^2
 \left( \lambda_3-1 \right) ^2 \left( \lambda_2-1 \right) ^2 \left( -\lambda_3+\lambda_2 \right) ^2 \left( \lambda_1-1 \right) ^2 \left( -\lambda_3+\lambda_1 \right) ^2
 \left( -\lambda_2+\lambda_1 \right) ^2.
\end{align*}
\endgroup

The components of the rational map $\Phi: \M_2 \backslash \supp{ (\chi_{35})}_0 \to \M_2$ with $(j_1, j_2, j_3) \mapsto (j'_1, j'_2, j'_3)$ are given by
\begin{equation*}
\begin{split}
 j_1^\prime 	= \frac{64}{729} 	\, \frac{g^{(1)}(j_1,j_2,j_3)}{q(j_1,j_2,j_3)},\quad
 j_2^\prime 	= \frac{4}{729} 	\, \frac{g^{(2)}(j_1,j_2,j_3)}{q(j_1,j_2,j_3)}, \quad
 j_3^\prime 	= \frac{1}{729} 	\, \frac{g^{(3)}(j_1,j_2,j_3)}{q(j_1,j_2,j_3)},
\end{split}
\end{equation*} 
with
\begingroup\makeatletter\def\f@size{7}\check@mathfonts\def\maketag@@@#1{\hbox{\m@th\large\normalfont#1}}%
 \begin{align*}
q(j_1,j_2,j_3) & =  j_1^5 \, \Big(j_2^4j_1^3-12\,j_1^3j_2^3j_3+54\,j_1^3j_2^2j_3^2-108\,j_1^3j_2j_3^3+81\,j_1^3j_3^4
 +78\,j_2^5j_1^2-1332\,j_1^2j_2^4j_3+8910\,j_1^2j_2^3j_3^2-29376\,j_1^2j_2^2j_3^3+47952\,j_1^2j_2j_3^4\\
 &-31104\,j_1^2j_3^5-159\,j_1j_2^6+1728\,j_1j_2^5j_3-6048\,j_1j_2^4j_3^2+6912\,j_1j_2^3j_3^3+80\,j_2^{7}-384\,j_2^6j_3
 -972\,j_1^4j_2^2+5832\,j_1^4j_2j_3-8748\,j_1^4j_3^2-77436\,j_1^3j_2^3\\
 &+870912\,j_1^3j_2^2j_3-3090960\,j_1^3j_2j_3^2 +3499200\,j_1^3j_3^3+592272\,j_2^4j_1^2-4743360\,j_1^2j_2^3j_3+9331200\,j_1^2j_2^2j_3^2-41472\,j_1j_2^5
+236196\,j_1^5\\
&+19245600\,j_2j_1^4-104976000\,j_1^4j_3-507384000\,j_2^2j_1^3+2099520000\,j_1^3j_2j_3+125971200000\,j_1^4\Big)  ,\\
 g^{(1)}(j_1,j_2,j_3) & = \Big(- j_2^2j_1+6\,j_2j_3j_1-9\,j_3^2j_1+j_2^3+540\,j_1^2 \Big)^5 ,\\
g^{(2)}(j_1,j_2,j_3) & = \Big( j_2^4j_1^2-12\,j_1^2j_2^3j_3+54\,j_1^2j_2^2j_3^2-108\,j_1^2j_2j_3^3+81\,j_1^2j_3^4
-2\,j_1j_2^5+12\,j_1j_2^4j_3-18\,j_1j_2^3j_3^2+j_2^6-756\,j_2^2j_1^3+4536\,j_1^3j_2j_3-6804\,j_1^3j_3^2\\
&+5130\,j_1^2j_2^3-17496\,j_1^2j_2^2j_3+131220\,j_1^4-2332800\,j_2j_1^3 \Big) \Big( -j_2^2j_1+6\,j_2j_3j_1-9\,j_3^2j_1+j_2^3+540\,j_1^2 \Big)^3 , 
 \stepcounter{equation}\tag{\theequation}\label{Components_Phi}\\
 g^{(3)}(j_1,j_2,j_3) & = \Big( -j_1^3j_2^6+18\,j_1^3j_2^5j_3-135\,j_1^3j_2^4j_3^2+540\,j_1^3j_2^3j_3^3
 -1215\,j_1^3j_2^2j_3^4+1458\,j_1^3j_2j_3^5-729\,j_1^3j_3^6+3\,j_1^2j_2^{7}-36\,j_1^2j_2^6j_3
  +162\,j_1^2j_2^5j_3^2-324\,j_1^2j_2^4j_3^3\\
 & +243\,j_1^2j_2^3j_3^4-3\,j_1j_2^{8}+18\,j_1j_2^{7}j_3
  -27\,j_1j_2^6j_3^2+j_2^{9}+1350\,j_1^4j_2^4-16200\,j_1^4j_2^3j_3+72900\,j_1^4j_2^2j_3^2-145800\,j_1^4j_2j_3^3
  +109350\,j_1^4j_3^4-6345\,j_1^3j_2^5\\
 & +52650\,j_1^3j_2^4j_3-144585\,j_1^3j_2^3j_3^2+131220\,j_1^3j_2^2j_3^3
  +4995\,j_1^2j_2^6-14580\,j_1^2j_2^5j_3-599724\,j_1^5j_2^2+3598344\,j_1^5j_2j_3-5397516\,j_1^5j_3^2
  +4175226\,j_1^4j_2^3\\
  & -15390648\,j_1^4j_2^2j_3+4898880\,j_1^4j_2j_3^2-1961496\,j_1^3j_2^4+87392520\,j_1^6
  -881798400\,j_1^5j_2-1259712000\,j_1^5j_3 \Big) \\
  &\times \Big( -j_1j_2^2+6\,j_1j_2j_3-9\,j_1j_3^2+j_2^3+540\,j_1^2 \Big)^2 .\\
 \end{align*}
\endgroup
\end{landscape}

\bibliographystyle{amsplain}

\bibliography{ref}{}

\end{document}